\documentclass{amsart}
\usepackage[utf8]{inputenc}
\usepackage[T1]{fontenc}
\usepackage{amssymb,amsthm,amsmath,graphicx, longtable, wrapfig, rotating, microtype, xparse, enumitem, mathtools, caption, subcaption, chngcntr, xcolor, bookmark}
\usepackage{hyperref}

\theoremstyle{plain} 
\theoremstyle{plain}
\newtheorem{theorem}{Theorem}
\newtheorem{lemma}[theorem]{Lemma}
\newtheorem{corollary}[theorem]{Corollary}

\newtheorem{proposition}[theorem]{Proposition}
\newtheorem*{definition}{Definition}

\newcommand{\om}{\omega}
\newcommand{\lex}{\mathrm{lex}}
\newcommand{\al}{{\alpha}}

\newcommand\Nat{\mathbf{N}}

\newcommand\RCA{\mathsf{RCA}}
\newcommand\ACA{\mathsf{ACA}}

\newcommand\ATR{\mathsf{ATR}}

\newcommand\RT{\mathsf{RT}}

\newcommand\FreeS{\mathsf{FS}}
\newcommand\TS{\mathsf{TS}}

\newcommand\RRT{\mathsf{RRT}}

\newcommand\FS{\mathsf{FS}}

\newcommand\sW{\mathrm{sW}}

\newcommand\WO{\mathrm{WO}}

\newcommand\KO{\mathcal{O}}

\newcommand\Q{\mathsf{Q}}
\newcommand\PP{\mathsf{P}}

\newcommand\conc{^\smallfrown}

\title[Free sets, thin sets and rainbows for barriers]{\texorpdfstring{Free sets, thin sets and rainbows for barriers}{Free sets, thin sets and rainbows for barriers}}
\author[L. Carlucci]{Lorenzo Carlucci}
\author[O. Gjetaj]{Oriola Gjetaj}

\begin{document}

\begin{abstract}
We formulate and prove the generalizations of Friedman's free set and thin set theorems and of the rainbow Ramsey theorem to colorings of barriers. We analyze the strength of these theorems from the point of view of computability theory proving some upper and lower bounds on the complexity of solutions for computable instances and some uniform computable reductions. 
We obtain as corollaries some proof-theoretical results on the logical strength of the theorems, in the spirit of reverse mathematics.

\keywords{Ramsey Theory  \and Barriers \and Hyperarithmetical Hierarchy.}
\end{abstract}

\maketitle

\section{Introduction and motivation}

Many variants or consequences of Ramsey’s theorem have been extensively investigated in computability, proof theory and reverse mathematics. Among these, the free set theorem, the thin set theorem and the rainbow Ramsey theorem have attracted significant interest in recent decades (see, e.g., \cite{Cholak_Giusto_Hirst_Jockusch_2005,wang2014some,Pat:16,cholak2020thin,Liu2022-LIUTRM}). 

The free set theorem ($\FS^n$) states that, for every coloring of the $n$-subsets of the natural numbers in unboundedly many colors, there exists an infinite set $H\subseteq \Nat$ such that for all $n$-subsets $s$ of $H$, the color of $s$ is either not in $H$ or else is in $s$ itself. Such a set is called {\em free} for the coloring. The thin set theorem, denoted $\TS^n_\om$ (or just $\TS^n$) is a weak variant of the free set theorem asserting that for any coloring of the $n$-subsets of $\Nat$ there is an infinite set $H\subseteq\Nat$ such that the $n$-subsets of $H$ avoid at least one color. Such a set is called {\em thin} for the coloring.
Both the free set theorem and the thin set theorem are consequences of Ramsey's theorem.  Cholak et al. \cite{Cholak_Giusto_Hirst_Jockusch_2005} started the study of the logico-computational content of these theorems. Contrary to Ramsey's theorem, for any $n\geq 3$, none of these principles for colorings of $n$-subsets codes the halting set. This surprising result was established by Wang \cite{wang2014some}.

The rainbow Ramsey theorem ($\RRT^n_k$) is an “anti-Ramsey” principle that guarantees, for any coloring of the $n$-subsets of the natural numbers such that no color is used more than $k$ times, the existence of an infinite set $H\subseteq\Nat$ whose $n$-subsets are assigned distinct colors (i.e., the coloring is injective on the $n$-subsets). Such a set is called a {\em rainbow} for the coloring. Galvin \cite{galvinBorelSetsRamsey1973} observed that the rainbow Ramsey theorem follows from the classical Ramsey's theorem. This theorem is studied by Csima and Mileti \cite{CsimaMileti_2009} and Wang \cite{wang2014some} from the perspective of computability and reverse mathematics. The rainbow Ramsey theorem for pairs ($\RRT^2_2$) is strictly weaker than Ramsey's theorem for pairs ($\RT^2_2$) over $\RCA_0$ and for no finite dimension $n$ and no bound $k$ the rainbow Ramsey theorem for $k$-bounded colorings of $n$-sets ($\RRT^n_k$) does code the halting set \cite{wang2014some}.

In joint work with Le Hou\'erou and Patey \cite{carlucci2024ramseyliketheoremsschreierbarrier} we generalized these principles to colorings of objects of unbounded dimensions, in particular to colorings of sets $s\subseteq \Nat$ such that $|s|=\min(s)+1$. These sets are known as {\em exactly large sets} or {\em $\om$-size} sets in logic literature (see \cite{carlucci2014strength,carlucci_volpi_zdanowski_25,marcone_montalban_volpi_25}). 

Ramsey's theorem for colorings of exactly large sets is computationally and proof-theoretically stronger than the usual Ramsey theorem for fixed finite dimensions ($\RT^n_k$) and even stronger than Ramsey's theorem for all finite dimensions ($\forall n \RT^n_k$). From the point of view of computability, the theorem corresponds to (closure under) the $\om$-th Turing jump (see \cite{Clote1984} and \cite{carlucci2014strength}). 
Generalizations to exactly large sets of the free set, thin set and rainbow Ramsey theorems were studied in \cite{carlucci2024ramseyliketheoremsschreierbarrier}. The free set and thin set theorems for exactly $\om$-large sets turn out to be significantly stronger than their versions for sets of fixed finite size. In particular, they can code the $\om$-th Turing jump and imply the system $\ACA_0^{+}$. By contrast, the rainbow Ramsey theorem for exactly $\om$-large sets satisfies cone avoidance and therefore does {\em not} code the first Turing jump. 

The family of exactly large sets forms a barrier in the sense of Nash-Williams better quasi-ordering theory and is known as the Schreier barrier in Ramsey theory and Banach space theory literature (see, e.g., \cite{Todorcevic+2010}). Indeed, Nash-Williams proved that Ramsey's theorem holds for arbitrary barriers. The computational strength of Ramsey's theorem for barriers is investigated in \cite{Clote1984}. It is thus quite natural to inquire into the generalization of the free set, thin set and rainbow Ramsey theorems to general barriers. 

In the present paper we formulate and prove these theorems and establish some bounds on the computational complexity of their solutions. All principles $\PP$ studied in this paper have the form
$\forall X\,(I(X)\to \exists Y\,S(X,Y))$, where $I$ and $S$ are arithmetical formulas. We call such statements $\forall\exists$-principles. An $X$ satisfying $I$ is called an {\em instance} of $\PP$, and a $Y$ satisfying $S(X,Y)$ is called a {\em solution} to $\PP$ for $X$. Our results are naturally stated in terms of strong uniform computable reductions, known as strong Weihrauch reductions (see \cite{Dzhafarov.Mummert2022}).
For $\forall\exists$-principles $\PP$ and $\Q$, we say that $\Q$ is strongly Weihrauch reducible to $\PP$,
written $\Q \leq_{\sW} \PP$, if there are Turing functionals $\Phi$ and $\Psi$ such that for every instance $X$ of
$\Q$, $\Phi(X)$ is an instance of $\PP$, and every solution $\hat{Y}$ to $\Phi(X)$ yields a solution $\Psi(\hat{Y})$ to $\Q$ for $X$.
We occasionally also mention results in reverse mathematics. We refer to \cite{Dzhafarov.Mummert2022} for background and definitions.

The paper is organized as follows. In Section \ref{sec:RT} we introduce barriers and Nash-Williams' Ramsey theorem for barriers. In Section \ref{sec:up} we introduce and prove the free set, thin set and rainbow Ramsey theorems for barriers. In Section \ref{sec:low} we prove some lower bounds on their solutions. 

\section{Ramsey theory on barriers}\label{sec:RT}

We start by fixing some notation. We use $\Nat$ to denote the set of natural numbers and $\Nat^+$ to denote the set of positive integers. We use uppercase letters $X, Y, Z$ to denote subsets of $\Nat$ which may be finite or infinite. We use lowercase letters $s, t$ to denote finite subsets of $\Nat$. 
If $X \subseteq \Nat$ we denote by $[X]^{< \om}$ (respectively $[X]^{\om}$) the set of finite (respectively, infinite) subsets of $X$. For $n \in \Nat$ we denote by $[X]^n$ the set of subsets of $X$ with exactly $n$ elements (which we call the $n$-subsets of $X$).
We identify a subset $X$ of $\Nat$ with the strictly increasing sequence (finite or infinite) which enumerates it and we denote by $X(i)$ or $X_i$ the element in position $i$ in the sequence. If $s$ is such a finite sequence then $|s|$ denotes its length (which coincides with the cardinality of the range of $s$ since $s$ is increasing) and we write $s = (s_0, \ldots, s_{|s|-1})$. 
Given $s, X \subseteq \Nat$, by $s \sqsubseteq X$ we mean that $s$, as a sequence, is an initial segment (or prefix) of $X$. This is stronger than $s \subseteq X$, which denotes set-theoretic inclusion as usual. The irreflexive versions of the previous relations are denoted by $\sqsubset$ and $\subset$.

By $s < X$ we mean that each element of $s$ is strictly smaller than each element of $X$ or equivalently (when $s$ and $X$ are nonempty) $\max s < \min X$. We write $s \conc X$ for the concatenation of the sequences $s$ and $X$. Notice that in our setting $s \conc X$ does make sense only if $s < X$ and, set-theoretically, coincides with $s \cup X$. Let $F\subseteq [\Nat]^{<\om}$ and $X \in [\Nat]^\om$ then $ F\vert X = \{ s \in F \,:\, s \subseteq X\}$. Thus, $F\vert X$ is an abbreviation for $[X]^{<\om}\cap F$. Let $X \in [\Nat]^\om$ and $n\in\Nat$, then we denote by $X/n$ (the $n$-tail of $X$) the set $X \setminus [0,n]$. 

We now introduce the basic definitions concerning barriers. Barriers are combinatorial objects first introduced by Nash-Williams \cite{nash-williams1968better} to define better quasi orderings. 

\begin{definition}[Barriers] \label{barrierDef}
A set $B\subseteq [\Nat]^{<\om}$ is a barrier on $X\in [\Nat]^\om$ if the following conditions are met:
\begin{enumerate}
\item (Base) $\bigcup B = X$;
\item (Sperner property) For all $s, t \in B$, $s \not\subset t$;
\item (Density) For all $Y\in [X]^\om$ there exists an $s\in B$ such that $s\sqsubset Y$. 
\end{enumerate}
\end{definition}
The set $X = \bigcup B$ is called the base of $B$ and is denoted by $b(B)$. The simplest examples of barriers are the families $[\Nat]^n$ for $n \in \Nat$. A trivial barrier often refers to the barrier of singletons, $[\Nat]^1 = \{\{ n \}  :  n \in \Nat\}$.

The family $\{ s \subset \Nat : |s| = \min(s)+1\}$ is a barrier, known as the Schreier barrier or as the barrier of exactly $\om$-large sets (also called $\om$-size sets). For $X\subseteq \Nat$ the family $\{ s \subset X  :  |s| = \min(s)+1\}$ is known as the Schreier barrier on $X$ \cite{Todorcevic+2010}.

Marcone~\cite{marcone2005wqo} started the study of barriers in the context of reverse mathematics. Among other things he showed that $\RCA_0$ proves that if $B$ is a barrier then $b(B)$ exists as a set and $B$ is isomorphic to a barrier $B'$ with $b(B') =\Nat$.

A fundamental fact, proved by Pouzet \cite{pouzet1972premeilleurordres}, is that the lexicographic order on barriers is always a well-ordering. Given a barrier $B \subseteq [\Nat]^{<\om}$ and $s, t \in B$, let $s <_\lex t$ if $s(i) < t(i)$ for the least~$i\in\Nat$ such that $s(i) \neq t(i)$, if it exists. Based on this, the order type of a barrier is the order type of its lexicographic order. If $B$ is a barrier we denote its order type by $ot(B)$. Assous~\cite{Assous} characterized the order types of barriers, and proved that they are either of the form $\om^n$ for some~$n \in \Nat^+$, or $\om^\alpha \cdot k$ for some~$\alpha \geq \om$ and $k \in \Nat^+$. The Schreier barrier is an example of a barrier of order type $\om^\om$. 

If $B$ is a barrier on $X\in [\Nat]^\om$, then for any infinite subset $Y \in [X]^\om$, the set $B\vert Y = \{s \in B~\vert~s \subseteq Y \}$ forms a barrier on $Y$, which we call a sub-barrier of $B$. 
Additionally, if $B$ is a non-trivial barrier on $X$ and $n\in X$ then $B_n =\{s \in [\Nat/n]^{<\om}  :  \{n\}\cup s \in B\}$ is a barrier on $X/n$. 
Note that $B_n$ is not in general a sub-barrier of $B$.

\begin{proposition}\label{prop:sub_bar}
Let $B$ be a barrier on $X\in [\Nat]^\om$. Then $B' \subseteq B$ is a barrier if and only if there exists $Y \in [X]^\om$ such that
$B\vert Y = B'$.
\end{proposition}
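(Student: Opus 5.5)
The two directions are handled separately, and throughout "barrier" means a barrier on some infinite base, as in the Definition.

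\emph{The direction ($\Leftarrow$).} Suppose $B' = B\vert Y$ with $Y\in[X]^\om$. This is the remark made just before the statement, and I would verify it directly.

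Density: every $Z\in[Y]^\om$ is in $[X]^\om$. So some $s\in B$ satisfies $s\sqsubset Z$, and then $s\subseteq Y$, so $s\in B\vert Y$.

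Sperner: this is inherited from $B$.

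Base: $\bigcup (B\vert Y)\subseteq Y$ is clear. For the reverse inclusion, take $y\in Y$ and apply density of $B$ to $Z=Y\setminus\{z<y\}$, i.e.\ the tail of $Y$ starting at $y$. This gives some $s\in B$ with $s\sqsubset Z$. Elements of a barrier are nonempty: if $\emptyset\in B$, the Sperner property would force $B=\{\emptyset\}$, contradicting $\bigcup B=X$ infinite. Hence $y=\min s\in\bigcup(B\vert Y)$.

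\emph{The direction ($\Rightarrow$).} Let $B'\subseteq B$ be a barrier and set $Y=b(B')=\bigcup B'$. Since $B'$ is a barrier, $Y$ is infinite, and clearly $Y\subseteq X$. Every $s\in B'$ is a subset of $Y$, so $B'\subseteq B\vert Y$.

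For the converse inclusion, let $t\in B$ with $t\subseteq Y$. I want to show $t\in B'$. Choose $Z\in[Y]^\om$ with $t\sqsubset Z$; for example $Z = t\cup (Y/\max t)$, which is infinite because $Y$ is. By density of $B'$ there is $s\in B'$ with $s\sqsubset Z$.

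Now $s$ and $t$ are both initial segments of $Z$, so they are comparable under $\sqsubseteq$. That means $s\subseteq t$ or $t\subseteq s$. Both $s$ and $t$ lie in $B$, so the Sperner property of $B$ rules out proper inclusion in either direction. Therefore $s=t$, and so $t\in B'$. This gives $B\vert Y = B'$.

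\emph{Where the care is needed.} The only delicate point is the comparability step. It depends on the convention that $\sqsubset$ means initial segment of the increasing enumeration. The Sperner property must then be applied in $B$, not in $B'$, since $t$ is not yet known to lie in $B'$. The remaining details (nonemptiness of barrier elements, and the choice of $Z$ extending $t$ inside $Y$) are routine.
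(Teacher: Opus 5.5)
Your proof is correct. The paper does not argue this in the text; it cites Proposition 2.9 of Carroy--Pequignot and adds that passing between $B'$ and $Y$ is computable in both directions. Your argument supplies a self-contained proof of what the citation outsources: take $Y=\bigcup B'$, apply density of $B'$ to $t\cup (Y/\max t)$, and use the Sperner property of $B$ (not of $B'$) to force $s=t$. It is the standard argument. It uses Sperner only to rule out one element being a proper initial segment of another, so it works verbatim for fronts as well.

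What your version does not deliver is the effective content the paper records. The direction $Y\mapsto B\vert Y$ is clearly computable from $B\oplus Y$, but your choice $Y=\bigcup B'$ is a priori only c.e.\ in $B'$. Your own ingredients make it computable. Given $n$, search for some $s\in B'$ with $\min s>n$. Then $n\in Y$ if and only if $\{n\}\cup\{s_0,\dots,s_{j-1}\}\in B'$ for some $j<|s|$. The reason is that density of $B'$, applied to $\{n\}\cup s\cup (Y/\max s)$, gives an element of $B'$ beginning with $n$, and Sperner prevents it from containing all of $s$. This is the point of the paper's extra remark: solutions of the barrier principles are phrased sometimes as sub-barriers and sometimes as sets, and the later $\leq_{\sW}$ results need the two to be computably interchangeable.
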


\begin{proof}
See the proof of Proposition 2.9 in \cite{carroy_pequignot_2014}. Note that both directions between $B'$ and $Y$ are computable.
\end{proof}

One reason for interest in barriers is that Ramsey's theorem holds for finite colorings of barriers as the following well-known result by Nash-Williams \cite{nash-williams1968better} shows.

\begin{theorem}[Barrier Ramsey Theorem\footnote{Note that in the recent \cite{marcone_montalban_volpi_25} the “barrier Ramsey theorem” denotes a finitary theorem generalizing the Paris-Harrington principle.}]
Fix a barrier $B$ on $X\in [\Nat]^\omega$. For every $f: B \to k$, where $k$ is a positive integer, there exists an infinite set $H\in [X]^\omega$ such that $f$ is constant on $B\vert H$. We say that $H$ is monochromatic for $f$. We denote this principle by $\RT^B_k$.
\end{theorem}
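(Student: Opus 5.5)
We prove the Barrier Ramsey Theorem by transfinite induction on the order type $ot(B)$, which is well defined by Pouzet's theorem that the lexicographic order on a barrier is a well-ordering. Equivalently, we induct on the well-founded rank of the tree of proper initial segments of elements of $B$. By Marcone's result we may assume $b(B)=\Nat$, or simply work on the base $X$ throughout.

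\emph{Base case.} If $B$ is the trivial barrier $[X]^1$, then $f$ is a $k$-coloring of the points of $X$. By the pigeonhole principle some color class is infinite, and that class serves as $H$.

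\emph{Inductive step.} Suppose $B$ is non-trivial. For $n\in X$, consider the derived barrier $B_n$ on $X/n$. If some element of $B$ is the singleton $\{n\}$, then the Sperner property forces $\{n\}$ to be the only element of $B$ containing $n$ as its minimum. We first pass to an infinite $Y\subseteq X$ on which either all elements of $B\vert Y$ are singletons (the trivial case) or no element of $B\vert Y$ is a singleton. In the latter case each $B_n$ is a genuine barrier of strictly smaller rank. Here is where the well-foundedness is used: the tree of $B_n$ is the subtree of the tree of $B$ above the node $\{n\}$.

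\emph{Fusion construction.} We build $n_0<n_1<\dots$ and infinite sets $X=Y_{-1}\supseteq Y_0\supseteq Y_1\supseteq\dots$ as follows. Set $n_i=\min Y_{i-1}$. The coloring $g_i(s)=f(\{n_i\}\cup s)$ on $B_{n_i}\vert (Y_{i-1}/n_i)$ is a coloring of a sub-barrier of $B_{n_i}$; by Proposition \ref{prop:sub_bar} this is a barrier of no larger rank. The induction hypothesis then gives an infinite $Y_i\subseteq Y_{i-1}/n_i$ homogeneous for $g_i$, with some color $c_i<k$. Next apply pigeonhole to the sequence $(c_i)$ to obtain an infinite set $I$ of indices with constant color $c$, and let $H=\{n_i : i\in I\}$. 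To check that $H$ works, take any $s\in B\vert H$ with $\min s=n_i$. Then $s\setminus\{n_i\}\subseteq \{n_j:j>i\}\subseteq Y_i$ and $s\setminus\{n_i\}\in B_{n_i}$, so $f(s)=c_i=c$.

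The main obstacle is making the induction legitimate. We must confirm that $B_n$ restricted to a subset is still a barrier whose rank is strictly smaller than that of $B$. This requires checking density and the Sperner property for $B_n\vert Y$, and comparing ranks via the tree of initial segments rather than via Assous's order types. We also need to handle the interaction between singletons $\{n\}\in B$ and the derived barriers, which we do by the preliminary thinning described above.
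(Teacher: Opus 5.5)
The paper does not prove this statement. It quotes it from Nash-Williams and remarks that it also follows from the clopen Ramsey theorem. Your argument is the standard one: induction on the rank of the barrier, a diagonal fusion, and a final pigeonhole. It is correct, with a few points to tighten.

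\emph{Starting the fusion.} The fusion must start from the thinned set, i.e.\ $Y_{-1}=Y$ with $B$ replaced by $B\vert Y$, not $Y_{-1}=X$. The thinning is genuinely needed: $\{\{0\}\}\cup[\Nat/0]^2$ is a non-trivial barrier with $B_0=\{\emptyset\}$. (So the paper's passing remark that $B_n$ is a barrier on $X/n$ for every non-trivial $B$ needs the same proviso.) The thinning itself is immediate. Let $S=\{n\in X:\{n\}\in B\}$. Sperner gives $B\vert S=[S]^1$, while $B\vert(X\setminus S)$ contains no singletons, and one of the two sets is infinite.

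\emph{Well-foundedness of the tree.} You should say why the tree $T_B$ of proper initial segments of elements of $B$ is well founded. The union $Z$ of an infinite branch would lie in $[X]^\om$. The $s\in B$ with $s\sqsubset Z$ would then be a proper initial segment of another element of $B$, contradicting Sperner. With this you need neither Pouzet's theorem nor Proposition \ref{prop:sub_bar}. Rank does not increase under restriction because $T_{B\vert Z}\subseteq T_B$, and $T_{B_n}$ is the subtree of $T_B$ above $(n)$ provided $\{n\}\notin B$.

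\emph{Order type versus rank.} Order type and tree rank are different ordinals, so \emph{equivalently} is loose. Induction on $ot(B)$ would also work, though: $ot(B)=\sum_{x\in X}ot(B_x)$ has infinitely many nonzero summands, so each $ot(B_x)<ot(B)$.

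\emph{Comparison with the clopen route.} The route the paper points to is as follows. Color $Y\in[X]^\om$ by $f(s)$ for the unique $s\in B$ with $s\sqsubset Y$. Each color class is clopen, so finitely many applications of the clopen Ramsey theorem give $H$ with $[H]^\om$ homogeneous. Hence $B\vert H$ is monochromatic, since every $s\in B\vert H$ is an initial segment of $s\cup H/\max(s)$. That argument is uniform in $B$ and needs no bookkeeping about singletons or ranks, but it rests on a Galvin--Prikry-type theorem whose own proof is harder. Yours is self-contained, using only pigeonhole and fusion. It also makes the rank of $B$ the explicit induction parameter, the same kind of ordinal measure (via $ot(B)$) that governs the complexity bounds in Theorem \ref{thm:Clote}.
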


More succinctly, in the light of Proposition \ref{prop:sub_bar}, the previous theorem asserts that for all barriers $B$ and for all $k$-coloring $f$ of $B$ there exists a sub-barrier $B'\subseteq B$ such that
$f$ is constant on $B'$. We call $B'$ monochromatic for $f$.

Ramsey's theorem for the Schreier barrier, studied in \cite{carlucci2014strength}, is a particular case of Nash-Williams' barrier Ramsey theorem, which in turn is a consequence of the clopen Ramsey theorem (see \cite{SIM:SOSOA}). 
The barrier Ramsey theorem {\em for all barriers} is equivalent to the system $\ATR_0$ (see Theorem 4.9 in \cite{marcone2005wqo}). Thus, the barrier Ramsey theorem is proof-theoretically much stronger than Ramsey's theorem for the Schreier barrier, which stands at the level of $\ACA_0^+$ as proved in \cite{carlucci2014strength}.

Clote~\cite{Clote1984} showed that the order-type of barriers is relevant to the computability-theoretic complexity of the barrier Ramsey theorem. He obtained a level-wise analysis of the complexity of solutions to computable instances of $\RT^B_k$ in the hyperarithmetic hierarchy as a function of the order type of the barrier $B$. He proved in particular the following theorem. The $\Sigma^0_\al$ sets are defined as in \cite{Rogers1987}.

\begin{theorem}[Clote~\cite{Clote1984}]\label{thm:Clote}
Fix~$k \in \Nat^+$.
\begin{enumerate}
    \item For every computable ordinal $\alpha$, for every computable barrier $B$ of order type at most $\om^{1+\al}$ and every computable coloring $f : B \to k$, there is an infinite $f$-monochromatic set computable in~$\emptyset^{(\al)}$.
    \item For every computable limit ordinal $\alpha$ there exists a computable barrier $B$ of order type $\om^{\al}$ and a computable coloring $f : B \to 2$ such that every infinite $f$-monochromatic set computes~$\emptyset^{( \al)}$.
    \item For every computable ordinal $\alpha$ there exists a computable barrier $B$ of order type $\om^{\al}$ and a computable coloring $f : B \to 2$ that omits all $\Sigma_{\om^\al}^* = \bigcup_{\beta < \om^\alpha}\Sigma^0_\beta$-sets.
\end{enumerate}
\end{theorem}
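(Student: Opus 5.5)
We prove part (1) by induction on the order type of $B$, using the derived barriers $B_n$ and a Mathias-style construction. Parts (2) and (3) need explicit barriers built from coding devices for the hyperarithmetic hierarchy.

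\textbf{Part (1).} We argue by effective transfinite induction on $\al$, and inside that on the order type of $B$. A nontrivial barrier of order type at most $\om^{1+\al}$ decomposes as the lexicographic concatenation of the barriers $\{n\}\conc B_n$, $n\in b(B)$. Each $B_n$ is a computable barrier (uniformly in $n$) of order type $\beta_n$ with $\om^{\beta_n}<\om^{1+\al}$, or $\le$ in the successor-exponent bookkeeping. Define the induced colorings $f_n(s)=f(\{n\}\cup s)$ on $B_n$. We then build a sequence $n_0<n_1<\cdots$ and a decreasing sequence of infinite sets $X_0\supseteq X_1\supseteq\cdots$, as in the usual proof of $\RT^{n+1}$ from $\RT^n$. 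At stage $i$ let $n_i=\min X_i$, and let $X_{i+1}\subseteq X_i/n_i$ be a homogeneous set for $f_{n_i}$ restricted to $B_{n_i}\vert X_i$, with colour $c_i$. Then $f$ on $B\vert\{n_0,n_1,\dots\}$ depends only on the minimum element, so a pigeonhole over $k$ colours finishes the construction.

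\textbf{Complexity in part (1).} The homogeneous sets for the $B_{n_i}$ are obtained by the inductive hypothesis relative to an oracle. For $\al=\beta+1$ the constituent barriers have order type below $\om^{1+\beta}\cdot\om$, essentially $\om^{1+\beta}\cdot m$. We relativize the standard trick: use $\emptyset^{(\beta)}$-computable solutions and one more jump to choose $X_{i+1}$ by the "prehomogeneous tree" argument, as in Jockusch's $\emptyset^{(n)}$ bound. For limit $\al$, each $B_n$ has order type $\om^{1+\beta_n}$ with $\beta_n<\al$, so $\emptyset^{(\al)}$ uniformly computes all $\emptyset^{(\beta_n)}$ and hence solutions.

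\textbf{Main obstacle.} The key issue is uniformity. The inductive hypothesis must be proved in a relativized, uniform form, with indices computable from an ordinal notation for $\al$ and an index for $B$. The actual homogeneous set is chosen via the jump of the previous level, and the exact offset $1+\al$ must be tracked through successor steps. To get this right we would strengthen the induction to produce prehomogeneous sets computable in $\emptyset^{(\al)}$, uniformly.

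\textbf{Part (2).} For limit $\al$, fix a fundamental sequence $\al_n\to\al$. Take $B=\bigcup_n\{n\}\conc B^{(n)}$ with $B^{(n)}$ coding $\emptyset^{(\al_n)}$ via Jockusch-type colorings, such as stable-limit colorings of $[\Nat]^{m}$ iterated transfinitely. Arrange that any homogeneous set, given its elements, computes $\emptyset^{(\al_n)}$ uniformly in $n$. Then its join computes $\emptyset^{(\al)}$.

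\textbf{Part (3).} We diagonalize against all $\Sigma^0_\beta$ sets for $\beta<\om^\al$ by a barrier-indexed variant of Jockusch's construction. For each $e$ and $\beta$, the $\beta$-th block of the barrier is coloured so that no $\Sigma^0_\beta$ set $W$ is homogeneous. We use a $\emptyset^{(\beta)}$-approximation to $W$ and colour a set $s$ according to whether its elements were enumerated by stage $\max s$.
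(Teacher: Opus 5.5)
The paper gives no proof of this theorem. It is quoted from Clote, and the only trace of his method in the paper is the Section~4 machinery (canonical barriers $B_a$ and the generalized limit lemma). Judged on its own, your proposal has a genuine gap already in part (1), exactly at the ``offset bookkeeping'' you postpone.

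Take your successor step (``$\emptyset^{(\beta)}$-computable solutions and one more jump'') at $\al=1$, $\beta=0$. Since $ot([\Nat]^2)=\om^2=\om^{1+1}$, it would give every computable $f:[\Nat]^2\to 2$ an infinite homogeneous set computable in $\emptyset'$. This contradicts Jockusch's theorem that some computable $f:[\Nat]^2\to 2$ has no infinite $\Sigma^0_2$ (hence no $\Delta^0_2$) homogeneous set. So (1), read literally, fails for every finite $\al\ge 1$; for $[\Nat]^{n+1}$ the sharp bound is $\emptyset^{(n+1)}$. Your count cannot be rescued: in Jockusch's proof, going from dimension $1$ to dimension $2$ costs two jumps, because the prehomogeneous set is a path through a finitely branching tree whose branching is only $\emptyset'$-decidable.

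Independently of the count, the mechanism does not give a bound of the claimed shape. In your Mathias construction $X_{i+1}$ solves $f_{n_i}$ \emph{inside} $X_i$, so the inductive bound is applied relative to $X_i$ and the levels add up. $X_i$ sits at about level $\beta_{n_0}+\dots+\beta_{n_{i-1}}$, so the whole sequence needs about $\emptyset^{(\beta\cdot\om)}$ in the successor case. In the limit case the sum overshoots $\al$ unless $\al$ is additively indecomposable: for $\al=\om\cdot 2$ with $ot(B_n)=\om^{\om+n}$, already $(\om+n_0)+(\om+n_1)>\om\cdot 2$. Jockusch's prehomogeneous trick does not repair this. It strips the \emph{last} element ($[\Nat]^{n+1}=[\Nat]^n*B_2$), and for barriers of infinite rank that does not lower the exponent, since $ot(B*B_2)=\om\cdot ot(B)$ (as the paper notes for $B^+$). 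So part (1) is missing its central idea.

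Parts (2) and (3) are outlines. In (2), reducing to a fundamental sequence is reasonable, but everything rests on a uniform family of computable colorings of barriers of order type $<\om^\al$ whose homogeneous sets, of either color, compute $\emptyset^{(\al_n)}$. ``Stable-limit colorings of $[\Nat]^m$ iterated transfinitely'' does not say how to get past limit levels below $\al$. That is what Clote's generalized limit lemma (Theorem~\ref{CloteGenLimit}) supplies, by approximating $H_a$ computably through limits along $B_a$.

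In (3), a computable coloring cannot consult a $\emptyset^{(\beta)}$-approximation, and the test ``enumerated by stage $\max s$'' only defeats $\Sigma^0_1$ sets. For larger $\beta$ you need limits along a barrier of order type about $\om^\beta$. This is how the paper runs Jockusch's diagonalization in Theorems~\ref{ThinNoCom} and~\ref{lb:rain}: stages are elements $s\in B_a$, and Lemma~\ref{lem:clote} provides a stage correct on an initial segment. It also shows your blocks cannot fit into order type $\om^\al$: blocks defeating $\Sigma^0_\beta$ for cofinally many $\beta<\om^\al$ push the order type to $\om^{\om^\al}$.

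In fact (3) as printed cannot hold. For $\al=1$, a barrier of order type $\om$ consists of singletons, and every computable $2$-coloring of it has a computable homogeneous set. For $\al=\om$, it contradicts (1), which yields $\emptyset^{(\om)}$-computable solutions for barriers of order type $\om^\om$.
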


\section{Free sets, thin sets and rainbows for barriers} \label{sec:up}

In this section we introduce and prove the natural generalizations of the free set, thin set and rainbow Ramsey theorems to arbitrary barriers.

\begin{definition}[Barrier Free Set Theorem]
Fix a barrier $B$ on $X\in [\Nat]^\omega$. For every $f:B \to \Nat$ there exists a barrier $B'\subseteq B$ such that for all $s \in B'$ if $f(s) \in b(B')$ then $f(s) \in s$. We say that $B'$ is free for $f$. If $B'=B\vert Y$ for some $Y\in [X]^\om$, we call $Y$ a free set for $f$.
We denote this principle by $\FreeS^B$. 
\end{definition}

\begin{definition}[Barrier Thin Set Theorem]
Fix a barrier $B$ on $X\in [\Nat]^\omega$. For every $f:B \to \Nat$ there exists a barrier $B'\subseteq B$ such that the image of $f$ restricted to $B'$ is different from $\Nat$. We say that $B'$ is thin for $f$. If $B'=B\vert Y$ for some $Y\in [X]^\om$, we call $Y$ a thin set for $f$.
We denote this principle by $\TS^B$. 
\end{definition}

Let $k\in\Nat$. A coloring $f:X \to Y$ is called $k$-bounded if for
all $x \in Y$ $|f^{-1}(x)|\leq k$.

\begin{definition}[Barrier Rainbow Ramsey Theorem]
Fix a barrier $B$ on $X\in [\Nat]^\omega$ and let $k\in \Nat$.  
For every $k$-bounded $f:B \to \Nat$, there exists a barrier $B' \subseteq B$ such that $f$ is injective on $B'$. We say that $B'$ is a rainbow for $f$. If $B'=B\vert Y$ for some $Y\in [X]^\om$, we call $Y$ a rainbow for $f$. We denote this principle by $\RRT^B_k$.
\end{definition}

The proofs of the thin set and rainbow Ramsey theorems for barriers are straightforward generalizations of the proofs for the finite dimensional version of these theorems, see sections \ref{sub:thin} and \ref{sub:rain}. This observation was indeed the initial motivation for the present project. The case of the free set theorem for barriers is more interesting. We treat this case in the next subsection.

\subsection{Barrier free set theorem}\label{sub:free}

We prove the barrier free set theorem from the barrier Ramsey theorem by generalizing the proof for the Schreier barrier versions of these theorems from \cite{carlucci2024ramseyliketheoremsschreierbarrier}.
Interestingly, some operations that {\em prima facie} looked specific to exactly $\om$-large sets naturally generalize to arbitrary barriers. Let $X \in [\Nat]^\om$ and let $S_X$ be the Schreier barrier on $X$. The following facts are easy to check. 

(1) If $s \in S_X$ and $k \in X$ and $k < \min(s)$
then $\{ k\} \cup \{ s_0, s_1, \dots, s_{k-1}\} \in S_X$.

(2) If $s = \{s_0, \dots, s_{s_0}\} \in S_X$ and $k \in X$ and $s_i < k < s_{i+1}$ for some $i \in [0, s_0-1]$ then $\{ s_0, s_1, \dots, s_{i}, k, s_{i+1}, \dots s_{s_0-1}\} \in S_X$. 

We jointly generalize the above two properties to general barriers in the following lemma. 

\begin{lemma}
    Let $B$ be a barrier with base $X\in [\Nat]^\om$ and let $s = \{s_0,\ldots, s_n\}\in B$ with $s_0<\ldots < s_n$. Fix $i\in [-1,n-1]$ (with $s_{-1}=-1$) and let $k \in (s_{i}, s_{i+1})$ such that $k \in X$.
    Then there exists a (unique) element of $B$ of the form
    $$\{s_0,\ldots, s_i,k, s_{i+1}, \dots, s_j\}$$ 
    with $j < n$. We call such an element the $k$-variant of $s$ and denote it by $s[k]$.
\end{lemma}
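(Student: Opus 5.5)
The plan is to apply the density property of $B$ to the infinite set
$$Y = \{s_0,\ldots,s_i,k\} \cup \{x\in X : x> s_n\} \cup \{s_{i+1},\ldots,s_n\}$$
and to choose $Y$ so that its increasing enumeration begins $s_0,\ldots,s_i,k,s_{i+1},\ldots,s_n$. Concretely, let $Y = \{s_0,\ldots,s_i,k,s_{i+1},\ldots,s_n\}\cup (X/s_n)$. This is an infinite subset of $X$, because $k\in X$ and $s\subseteq X$ (the base is $\bigcup B = X$). By density there is some $t\in B$ with $t\sqsubset Y$. Existence then reduces to showing that $t$ is a proper prefix of $u=\{s_0,\ldots,s_i,k,s_{i+1},\ldots,s_{n-1}\}$, equivalently that $t$ has at most $i+1+(n-i-1) = n+1$ elements with the last one not exceeding $s_{n-1}$. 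That form is exactly the displayed form with $j<n$. If $i=n-1$, then $s_{i+1}=s_n$ and $u=\{s_0,\ldots,s_{n-1},k\}$; in that case the claim is that $t\sqsubseteq u$, and I must still rule out $t$ being the short prefix $\{s_0,\ldots,s_m\}$ with $m\le i$.

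The key step is to use the Sperner property to rule out both "too short" and "too long" prefixes. First, $t$ must contain $k$. Otherwise $t=\{s_0,\ldots,s_m\}$ with $m\le i<n$, which makes $t\subsetneq s$ and contradicts Sperner. Second, $t$ cannot contain all of $s_{i+1},\ldots,s_n$, since then $s\subsetneq t$, again contradicting Sperner (the containment is proper because $k\in t\setminus s$). Since $t$ is a prefix of $Y$ and every element of $Y$ beyond $s_n$ comes after $s_n$, failing to contain $s_n$ forces $t\subseteq\{s_0,\ldots,s_i,k,s_{i+1},\ldots,s_{n-1}\}$. Hence $t=\{s_0,\ldots,s_i,k,s_{i+1},\ldots,s_j\}$ for some $j$ with $i\le j<n$, where $j=i$ means $t$ ends at $k$. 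This gives existence.

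Uniqueness follows from Sperner applied again. Suppose two elements of $B$ have this form, with indices $j<j'$. Both are initial segments of the same sequence $s_0,\ldots,s_i,k,s_{i+1},\ldots$, so the shorter is properly contained in the longer, which is a contradiction.

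I expect the main obstacle to be the bookkeeping at the boundary cases. When $i=-1$, $k$ is below $s_0$ and the prefix $\{s_0,\ldots,s_i\}$ is empty, so the "too short" case is just $t=\emptyset$. A barrier cannot contain $\emptyset$ together with the nonempty element $s$, again by Sperner, so this case is excluded the same way. The other point to check is that $j<n$ is meant to include the case where $t$ ends right after $k$. In all cases the argument uses only the Sperner property and density, so the same proof applies to the barrier $B$ and its base uniformly.
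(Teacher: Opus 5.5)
Your proof is correct and is essentially the paper's argument: apply density to $\{s_0,\ldots,s_i,k,s_{i+1},\ldots,s_n\}\cup X/s_n$, then use the Sperner property to show the resulting prefix stops no earlier than $k$ and no later than $s_{n-1}$; you also spell out uniqueness, which the paper leaves implicit. The only blemish is the roadmap sentence calling $t$ a \emph{proper} prefix of $u$ (the case $t=u$, i.e.\ $j=n-1$, is allowed), but your second paragraph proves the right statement, namely $t\sqsubseteq u$ with $k\in t$.
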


\begin{proof} 
Such an element must exist by the Density and Sperner properties by considering the infinite set
$$\{s_0, \dots, s_i\} \cup \{k\} \cup \{s_{i+1}, \dots, s_n\}\cup X/s_n.$$ 

Necessarily the desired initial segment must stop not earlier than $\{s_0, \dots, s_i\}\cup\{k\}$ (otherwise it would be 
a subset of $s$) and not later than $\{s_0, \dots, s_i\}\cup\{k\} \cup \{s_{i+1}, \dots, s_{n-1}\}$ (otherwise $s$ would be a subset of it).
 
\end{proof}

Note that, whenever defined, $s[k] <_\lex s$. If $s \in B$ and $k \in X$ is such that $s_{n-1} < k < s_n$, then $(s_0, \dots, s_{n-1}, k) \in B$. For $k < \min(s)$ the notion of $k$-variant is closely related with the definition of the relation $\lhd$ on a barrier that is a key step in the definition of a better quasi order. For $s, t \in [\Nat]^{<\om}$ the relation $s \lhd t$ is defined to hold if there exists a $u\in [\Nat]^{<\om}$ such that $s \sqsubseteq u$ and $t \sqsubseteq u[1, |u|]$. If $s$ is the $k$-variant of $t$ for some $k < \min(t)$ then $s \lhd t$ holds. If $s$ and $t$ are members of a barrier and $s \lhd t$, then  the reverse implication also holds; $s$ is the $k$-variant of $t$ for some $k < \min(t)$.

We next observe that barriers are preserved under translations. We use the following notation. If $X = \{ x_1, x_2, \ldots\}\subseteq \Nat$ we define $X^+=\{x_i+1  :  i \in \Nat\}$.

\begin{lemma}
    Let $B$ be a barrier with base $X\in [\Nat]^\om$. Let 
    $$B^+ = \{ s^+ \cup\{m\}  :  s \in B \text{ and } m \in X^+/(\max(s)+1)\}$$ 
     where $s^+ = \{x+1: x\in s\}$. $B^+$ is barrier with base $X^+$.
\end{lemma}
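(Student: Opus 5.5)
We check the three clauses of the barrier definition for $B^+$ in turn. The key observation is that the map $x\mapsto x+1$ is an order isomorphism from $X$ onto $X^+$. It therefore sends $B$ to the barrier $B^{\uparrow}=\{s^+ : s\in B\}$ on $X^+$, and all three properties transfer verbatim from $B$ to $B^{\uparrow}$. An element of $B^+$ is then exactly an element $s^+\in B^{\uparrow}$ with one further element $m\in X^+$, $m>\max(s^+)$, appended at the end. So $B^+$ is, informally, the one-step "extension" $\{t\cup\{m\} : t\in B^{\uparrow},\ m\in X^+/\max(t)\}$. We prove the lemma for an arbitrary barrier $C$ on a base $Z$ in place of $B^{\uparrow}$ on $X^+$, namely that $C'=\{t\cup\{m\}: t\in C, m\in Z, m>\max t\}$ is a barrier on $Z$.

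\emph{Base.} Every element of $C'$ is a subset of $Z$. Conversely, given $z\in Z$, pick $t\in C$ with $z\in t$, which exists by the base property of $C$. Take any $m\in Z$ with $m>\max t$, which exists since $Z$ is infinite. Then $z\in t\cup\{m\}\in C'$.

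\emph{Density.} Given $Y\in[Z]^\om$, density of $C$ gives $t\in C$ with $t\sqsubset Y$. Let $m$ be the next element of $Y$ after $\max t$. Then $t\cup\{m\}\sqsubset Y$, and this set lies in $C'$.

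\emph{Sperner property.} This is the step needing care. Suppose $t\cup\{m\}\subset u\cup\{m'\}$ with $t,u\in C$, $m>\max t$ and $m'>\max u$. We split into cases according to where $m$ and $m'$ land.
\begin{itemize}
\item If $m\neq m'$, then $t\cup\{m\}\subseteq u\cup\{m'\}$ forces $m\in u$, hence $m\leq \max u$. Also $t\subseteq u\cup\{m'\}$, and every element of $t$ is $<m\leq\max u<m'$, so $t\subseteq u$. By Sperner for $C$ we get $t=u$. But then $m\in u=t$ contradicts $m>\max t$.
\item If $m=m'$, then $m$ exceeds every element of $t$ and of $u$. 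Proper inclusion gives $t\subset u$, again contradicting Sperner for $C$.
\end{itemize}
In every case we reach a contradiction, so no element of $C'$ is a proper subset of another, and $C'$ has the Sperner property.

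Finally, we apply this to $C=B^{\uparrow}$ and $Z=X^+$. The only subtlety is matching the indexing in the statement: "$m\in X^+/(\max(s)+1)$" means $m\in X^+$ and $m>\max(s)+1=\max(s^+)$, which is exactly our condition $m>\max t$. We expect no step to be a real obstacle; the Sperner case analysis is the only nonmechanical part.
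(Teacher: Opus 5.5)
Your proof is correct and takes essentially the same route as the paper, which checks Base, Sperner and Density directly for $B^+$ using the same moves: append a large $m$ for Base, and append the next element of $Y$ for Density. Factoring through the order isomorphism $x\mapsto x+1$ is only a reorganization; your Sperner case split on $m=m'$ versus $m\neq m'$ is actually more complete than the paper's, which explicitly excludes only $s^+\cup\{a\}\subseteq t^+$ and leaves the case $b\in s^+\cup\{a\}$ unaddressed.
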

\begin{proof}
    We verify that $B^+$ satisfies the Base, Sperner and Density properties:
    
(Base) We need to show that the base of $B^+$ is infinite and is equal to $X^+$. Notice that every element of $s'\in B^+$ is either $s_i+1$ with $s_i\in X$, or the last coordinate $m\in X^+$. Thus, $b(B^+)\subseteq X^+$. Conversely, let $x \in X^+$. Then $x= x_k+1$ for some $x_k \in X$. Since $X$ is the base of $B$ there is some $s\in B$ such that $x_k \in s$. Let $m \in X^+/\max(s^+)$. Then $s^+ \cup \{m\} \in B^+$ and contains $x_k+1$. 

(Sperner property) For all $s', t' \in B^+$ such that $s'\neq t'$, $s' \not\subseteq t'$. Let $s' = s^+ \cup \{ a\}$ and 
$t' = t^+ \cup \{ b\}$ for some $s, t \in B$, $a > \max(s)+1$ and $b > \max(t) +1$ and such that $s' \neq t'$. Obviously if 
$s^+ = t^+$ then $a \neq b$ and therefore  $s^+ \cup \{ a\} \not\subseteq t^+ \cup \{ b\}$. So suppose that $s^+ \neq t^+$, i.e.
$s \neq t$. By the Sperner Property on $B$ we have $s \not\subseteq t$. Suppose $s^+ \cup \{ a\} \subseteq t^+ \cup \{ b\}$.
If $s^+ \cup \{ a \} \subseteq t^+$ then $s \subseteq t$, which is impossible. 

(Density) We want to show that for all $Y \in [X^+]^\om$ there exists an $s' \in B^+$ such that $s'\sqsubset Y$.
By definition we have that $Y = Z^+$ for some $Z \in [X]^\om$. By the density of $B$ on $X$
there exists an $s \in B$ such that $s \sqsubset Z$. Thus $s^+ \sqsubset Z^+$. Let $z$ be the first 
element of $Z$ larger than $\max(s^+)$. Then $s^+ \cup \{z\} \sqsubset Z^+$ and $s^+ \cup \{z\}\in B^+$. 
 \end{proof}

In the next theorem we prove the free set theorem for barriers using Ramsey's theorem for barriers. We express this result (and later positive results) in terms of strong Weihrauch reductions, which we denote by $\leq_\sW$ (see \cite{Dzhafarov.Mummert2022} for definitions). 

\begin{theorem}\label{thm:fs_red_rt}
For each barrier $B$ with base $\Nat$, $\FS^B\leq_{\sW} \RT^{B^+}_2$. 
\end{theorem}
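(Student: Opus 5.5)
The plan is to follow the finite-dimensional proof of $\FS^n$ from Ramsey's theorem and its Schreier-barrier version in \cite{carlucci2024ramseyliketheoremsschreierbarrier}. The extra top coordinate $m$ in an element $s^+\cup\{m\}$ of $B^+$ will serve as a ``witness slot'' that records where the value $f(s)$ lands. The $+1$ shift keeps $B^+$ a barrier, by the translation lemma above, and is undone at the end.

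\emph{The reduction.} Given $f\colon B\to\Nat$, let $\Phi(f)$ be the coloring $g\colon B^+\to 2$ defined as follows. For $u\in B^+$, write $u=s^+\cup\{m\}$ with $s\in B$ and $m>\max(s)+1$; this decomposition is computable, since $s^+$ is just $u$ minus its maximum. Put
$$g(u)=1 \iff f(s)=m-1 \ \text{ or }\ \exists s'\in B\,\bigl(s'\subseteq s\cup\{m-1\},\ s'\neq s,\ f(s')\in (s\cup\{m-1\})\setminus s'\bigr).$$
The quantifier ranges over subsets of a finite set, and $B$ is computable (or given as part of the instance), so $g$ is uniformly computable from $f$. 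Given a $g$-homogeneous $Y\in[\Nat^+]^\om$, let $\Psi(Y)=Y^-=\{y-1: y\in Y\}$, possibly thinned by a computable operation if needed. By Proposition~\ref{prop:sub_bar}, $B\vert Y^-$ is then a sub-barrier of $B$.

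\emph{Ruling out colour $1$.} I would argue that $Y$ cannot be homogeneous of colour $1$. Fix $s\in B\vert Y^-$ and let $m$ range over the infinitely many elements of $Y$ above $\max(s)+1$. The first disjunct holds for at most one $m$. For the second disjunct, I would pass to a sparse subsequence of $m$'s: each $s'$ in question must contain $m-1$, so its value $f(s')$ must lie in the finite set $s$. Then a pigeonhole argument on $B\vert Y^-$, applying Ramsey for barriers once more or choosing $m$'s inductively, should produce some $m$ giving colour $0$.

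\emph{Colour $0$ gives freeness.} Now assume $g\equiv 0$ on $B\vert Y$ and let $Z=Y^-$. Take $s\in B\vert Z$ with $k=f(s)\in Z\setminus s$. There are two cases.
\begin{itemize}
\item If $k>\max s$, then $u=s^+\cup\{k+1\}$ lies in $B\vert Y$ and satisfies the first disjunct, so $g(u)=1$, a contradiction.
\item If $k<\max s$, then the $k$-variant $s[k]$ from the variant lemma exists. I want to realise the configuration ``$s$ together with $k$'' inside some $t\cup\{m-1\}$ with $t\in B\vert Z$, in such a way that the second disjunct fires for the pair $(t,s)$. The natural choice is to let $t$ be obtained from $s$ so that $s\subseteq t\cup\{m-1\}$ and $k\in t\cup\{m-1\}$, using the variant lemma and the Sperner property to control lengths.
\end{itemize}

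This second case is the main obstacle. The variant $s[k]$ is lexicographically smaller than $s$ and in general truncates $s$, so $s$ need not fit inside a single $t\cup\{m-1\}$ with $t\in B$. If this happens, I would redesign the second disjunct of $g$. Instead of ``some $s'\subseteq s\cup\{m-1\}$'', I would quantify over elements $s'\in B$ with $s'\subseteq s\cup\{m-1\}\cup[0,\max s]$ whose $f$-value lies in $s\cup\{m-1\}$. I would then use the colour-$1$ argument, with the tail of $Y$ chosen sparsely, exactly as for the Schreier barrier, where the variants of facts (1) and (2) were used.
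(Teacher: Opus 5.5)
Your architecture matches the paper: the top coordinate of $B^+$ serves as a witness slot, you shift by one, and you output $Y^-$. Your colour-$0$ argument for $f(s)>\max s$ is also exactly the paper's. Your coloring works for $[\Nat]^n$, where $t=(s\setminus\{\max s\})\cup\{k\}$ is always available. But the case you flag, $k=f(s)\in Z\setminus s$ with $k<\max s$, is a genuine gap, and it is fatal for your $g$.

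Your $g(u)$ only tests whether $f$-values of members of $B$ inside the window $t\cup\{m-1\}$ land in that window. So it can detect $f(s)=k$ only if some window contains $s\cup\{k\}$.

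\begin{itemize}
\item \textbf{Failure for the Schreier barrier.} Take $k<\min s$. Unshifted elements of $B^+$ are sets $w$ with $|w|=\min(w)+2$, and any $w\supseteq s\cup\{k\}$ would have $|w|\le k+2<|s\cup\{k\}|$. So no window contains $s\cup\{k\}$. Concretely, let $f$ be $0$ everywhere except $f(s)=k$ for a single $s$ with $1\le k<\min s$. Then $g\equiv 0$ on $B^+\vert\{2,3,\dots\}$, yet $\{1,2,3,\dots\}$ contains $s$ and $k$ and is not free. Choosing $s,k$ inside $\Psi(\{2,3,\dots\})$ after $\Psi$ is fixed shows that no post-processing $\Psi$ rescues this.
\item \textbf{Your fallback fails too.} Allowing $s'\subseteq s\cup\{m-1\}\cup[0,\max s]$ brings numbers outside $Z$ into the test, which homogeneity does not control. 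Your colour-$1$ exclusion then fails outright. For the trivial barrier and $f(\{x\})=x+1$, every $u$ with $\min u\ge 2$ gets colour $1$ via $s'=\{\min u-2\}$.
\item \textbf{Colour $1$ is unproved.} Even for your original $g$, the exclusion of colour $1$ is only asserted. The counting argument that works for $[\Nat]^n$ has no evident analogue when members of $B$ have varying shapes.
\end{itemize}

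The idea you are missing is to make $g$ non-local by recursion along the lexicographic well-order of $B^+$. The variant lemma is applied to $u$ inside $B^+$, not used to build a window containing $s$. For $u=(s_0,\dots,s_n)\in B^+$, write $u\ominus 1=(s_0-1,\dots,s_{n-1}-1)\in B$. The paper sets:
\begin{itemize}
\item $g(u)=0$ if $f(u\ominus 1)\in u\ominus 1$;
\item $g(u)=1-g(u[f(u\ominus 1)+1])$ if $f(u\ominus 1)+1$ lies below $s_0$ or strictly between $s_i$ and $s_{i+1}$ for some $i<n-1$;
\item $g(u)=0$ if $f(u\ominus 1)\in(s_{n-1}-1,s_n-1)$;
\item $g(u)=1$ otherwise.
\end{itemize}
Since $u[k]<_\lex u$ and $<_\lex$ well-orders every barrier, the recursion terminates, so $g$ is uniformly computable from $f$.

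Homogeneity now kills the interior case in either colour. If $f(u\ominus 1)\in H^-$ falls in an interior gap, then $u[f(u\ominus 1)+1]\in B^+\vert H$ has the opposite colour. The truncation you worried about is harmless, because nothing needs to contain all of $s$; the variant only needs its own elements in $H$. The top cases go as you intended:
\begin{itemize}
\item under colour $0$, take $s_n$ to be the successor of $s_{n-1}$ in $H$;
\item under colour $1$, a last coordinate in $H$ beyond $f(u\ominus 1)+1$ forces colour $0$.
\end{itemize}
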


\begin{proof}
Let $B$ be a barrier with base $\Nat$ and let $f:B\to \Nat$ be an instance of $\FS^B$.
For $s = (s_0, \dots, s_n) \in B^+$, let $s\ominus 1 = (s_0 - 1, \dots, s_{n-1}-1)$. Then $s \ominus 1 \in B$. Define $g$ by recursion on $<_{\lex}$ as follows. 

\begin{equation*}
    g(s) = \begin{cases}
           0 \hspace{1.5cm} & \text{if } f(s\ominus 1) = s_i-1 \textit{ for some } i  < n,  \\
           1 - g(s[f(s\ominus 1)+1]) & \text{if } f(s\ominus 1) < s_0 - 1, \text{or, for some } i < n-1,\\
           \hspace{1.2cm} & f(s\ominus 1) \in (s_i-1, s_{i+1}-1),\\
           0 \hspace{1.5cm} & \text{if } f(s\ominus 1) \in (s_{n-1}-1, s_n-1), \\
           1 \hspace{1.5cm} & \text{otherwise } (\textit{if } f(s\ominus 1) \geq s_n)   
           \end{cases} \quad
\end{equation*}

Note that $g$ is well-defined: if $f(s \ominus 1) < s_0 - 1$ then $f(s \ominus 1) + 1 < s_0 = \min(s)$, so that 
$s[f(s\ominus 1)+1]$ is well-defined, is in $B^+$ and is $<_\lex$-smaller than $s$. If $f(s\ominus 1) \in (s_i-1, s_{i+1}-1)$ for some  $i < n - 1$, then $f(s\ominus 1) +1 \in (s_i, s_{i+1})$ for some $i< n-1$ and $s[f(s\ominus 1)+1]$ is defined, is in $B^+$ and is $<_\lex$-smaller than $s$. 
Let $H = \{x_0, x_1, \dots \}\subseteq \Nat^+$ be infinite such that $B^+ | H$ is $g$-monochromatic as given by $\RT^{B^+}_2$. We claim that $B\vert H^-$ is $f$-free, where $H^- = \{x_0 - 1, x_1 - 1, \dots \}$. 
Consider some $(s_0 - 1, \dots, s_{n - 1} - 1) \in B\vert H^-$. Then $\{s_0, \dots, s_{n - 1}\} \subseteq H$ and for every $k \in H/s_{n-1}$, $(s_0, \dots, s_{n-1}, k) \in B^+\vert H$. 
To show that $B\vert H^-$ is $f$-free it is sufficient to show that for some $k$, writing $s=(s_0,\ldots,s_{n-1}, k) \in B^+\vert H$, we have that if $f(s\ominus 1) \in H^-$ then $f(s\ominus 1) \in s\ominus 1$, since all elements of $B\vert H^-$ are of the form $s\ominus 1$ for some $s \in B^+\vert H$. 
There are two cases:

\textbf{Case 1:} $B^+\vert H$ is monochromatic for the color $0$. Take $s_n$ to be the next element of $H$ after $s_{n - 1}$ and write $s = (s_0, \dots, s_{n})$, then $g(s) = 0$. We consider three subcases:

\textbf{Subcase 1.1:} $f(s \ominus 1) = s_i - 1$ for some $i < n$. 
Then $f(s\ominus 1)\in s\ominus 1$, so we are done.

\textbf{Subcase 1.2:} $f(s \ominus 1) \in (s_{n - 1} - 1, s_{n} - 1)$. 
Then $f(s\ominus 1)\notin H^-$ by definition of $s_n$ as the next element of $H$ after $s_{n-1}$.

\textbf{Subcase 1.3:} $f(s \ominus 1) < s_0 - 1$ or  $f(s \ominus 1) \in (s_i - 1, s_{i+1} - 1)$ for some $i < s_0 - 1$ and $g(s[f(s \ominus 1)+1]) = 1$. 
Since $f(s\ominus 1)+1 \in H$, we have $s[f(s\ominus 1)+1] \in B^+\vert H$. But then 
$g(s[f(s \ominus 1)+1]) = 1$ contradicts the fact that $B^+\vert H$ is $g$-monochromatic for the color $0$. 

\textbf{Case 2:} $B^+\vert H$ is monochromatic for the color $1$. Take $s_n \in H$ bigger than $s_{n - 1}$ and write $s = (s_0, \dots, s_{n})$, then $s \in B^+\vert H$ and $g(s) = 1$. We consider two subcases:

\textbf{Subcase 2.1:} $f(s \ominus 1) \geq s_{n}$. This case is impossible, indeed, as $H$ is infinite, there exists some element $x \in H$ such that $x > f(s \ominus 1) + 1$ and therefore $f(s \ominus 1) \in (s_{n} - 1, x - 1)$, which leads to $g(s_0, \dots, s_{n - 1}, x) = 0$ contradicting the fact that $B^+\vert H$ is $g$-monochromatic for the color $1$. Note that by definition of $B^+$, $(s_0, \dots, s_{n - 1}, x)\in B^+$. 

\textbf{Subcase 2.2:} $f(s \ominus 1) < s_0 - 1$ or $f(s \ominus 1) \in (s_i - 1, s_{i+1} - 1)$ for some $i < n - 1$ and $g(s[f(s \ominus 1)+1]) = 0$.  
Since $f(s\ominus 1)+1 \in H$, we have that $s[f(s\ominus )+1] \in B^+\vert H$. But 
$g(s[f(s \ominus 1)+1]) = 0$ contradicts the fact that $B^+\vert H$ is $g$-monochromatic for the color $1$. 

Therefore $B\vert H^-$ is $f$-free.
Notice that $g$ is uniformly computable in $f$ and that $H^-$ is uniformly computable in $H$. Hence, $\FS^B \leq_{\sW} \RT^{B^+}_2$.
 \end{proof}

In terms of reverse mathematics, the above proof shows that $\FS^B$ follows from $\RT^{B^+}_2$ over $\RCA_0 + \WO(ot(B^+))$, where $ot(B^+)$ is the order type of $B^+$. In terms of computability $\FS^B$ inherits the upper bound on solutions to $\RT^{B^+}_2$ from Theorem \ref{thm:Clote}. 
It is easy to show that, for every barrier $B$,
$$ot(B^+)= \omega\cdot ot(B),$$
since decomposes $B^+$ into intervals $I_s:=(s^+,m)$ with $m>\max(s^+)$ indexed by $s\in B$ such that each interval is ordered according $m$ (so that $ot(I_s) =\om$) and the relative order of the intervals is determined by the order of their index $s$.
Since $\RCA_0$ proves $\forall \alpha(\WO(\alpha) \to \WO(\omega\cdot\alpha))$, the proof of Theorem \ref{thm:fs_red_rt} shows that $\RT^{B^+}_2$ implies $\FS^B$ over $\WO(ot(B))$. 

\subsection{Barrier thin set theorem}\label{sub:thin}

The proof of the following proposition is a straightforward adaptation from the analogous result for the finite dimensional versions of the thin set theorem (Proposition 5.2 in \cite{Dorais.etal2015}).

\begin{proposition}
For all barriers $B$, $\TS^B \leq_{\sW} \RT^B_2.$
\end{proposition}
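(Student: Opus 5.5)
The reduction will follow Proposition 5.2 of \cite{Dorais.etal2015}. Given an instance $f:B\to\Nat$ of $\TS^B$, I take the computable $2$-coloring
$$g(s)=\begin{cases}0 & \text{if } f(s)=0,\\ 1 & \text{if } f(s)\neq 0,\end{cases}$$
which is uniformly computable in $f$, so $\Phi(f)=g$ is an instance of $\RT^B_2$ on the same barrier $B$. Given a solution $H\in[X]^\om$, meaning $B\vert H$ is $g$-monochromatic, I output $\Psi(H)=H$ itself, so the reduction is strong.

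It remains to check that $H$ is thin for $f$. First, $B\vert H$ is a barrier on $H$, hence a sub-barrier $B'\subseteq B$ in the sense of Proposition \ref{prop:sub_bar}. If $B\vert H$ has $g$-color $1$, then every $s\in B\vert H$ has $f(s)\neq 0$, so $0\notin f[B\vert H]$. If it has $g$-color $0$, then $f$ is constantly $0$ on $B\vert H$, and in particular $1$ is not in the image. In either case $f[B\vert H]\neq\Nat$, so $B\vert H$ is thin for $f$.

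There is no real obstacle, since the argument is completely uniform in the barrier. The only point to verify is that the sub-barrier $B\vert H$ is nonempty (by Density) and is a legitimate solution; both follow directly from the remark before Proposition \ref{prop:sub_bar}. Unlike Theorem \ref{thm:fs_red_rt}, no recursion along $<_\lex$ and no passage to $B^+$ is needed, so no well-ordering assumption enters. In reverse-mathematical terms this also shows that $\RT^B_2$ implies $\TS^B$ over $\RCA_0$.
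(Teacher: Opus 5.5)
Your proposal is correct and is the paper's proof. The paper uses the same $2$-coloring, separating $f(s)=0$ from $f(s)\neq 0$, and returns the monochromatic set unchanged; you simply write out the two color cases that the paper calls obvious.
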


\begin{proof}
 Let $f: B \to \Nat$ be an instance of $\TS^B$. Define $g:B\to 2$ as follows:

$$
g(s) = \begin{cases}
f(s) & \mbox{ if } f(s) = 0,\\
1 & \mbox{ otherwise.}
\end{cases}
$$
Any $g$-monochromatic sub-barrier $B'$ of $B$ is obviously thin for $f$.
 \end{proof}

The barrier thin set theorem also follows from the barrier free set theorem. 
The proof of the next theorem is the same as the proof for the finite dimensional case (Theorem 3.2 in \cite{Cholak_Giusto_Hirst_Jockusch_2005}). The observation that the argument works for barriers started the investigations on $\FS$ and $\TS$ for barriers.

\begin{theorem}
For all barriers $B$, $ \TS^B \leq_{\sW}\FreeS^B $.
\end{theorem}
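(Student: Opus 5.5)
The plan is to follow the finite dimensional argument of Cholak et al.: take the instance itself (the forward functional $\Phi$ is the identity) and get the backward functional $\Psi$ by removing one element from a free set. Let $f:B\to\Nat$ be an instance of $\TS^B$ and view it as an instance of $\FreeS^B$. Let $Y\in[X]^\om$ be such that $B\vert Y$ is free for $f$; by Proposition \ref{prop:sub_bar}, the solution sub-barrier and $Y=b(B\vert Y)$ compute each other uniformly, so we may work with $Y$. Put $y=\min(Y)$ and $Z=Y\setminus\{y\}$, which is clearly uniformly computable from $Y$. We then show that $B\vert Z$ is thin for $f$, and more precisely that $y\notin f[B\vert Z]$.

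The verification goes as follows. By the remark preceding Proposition \ref{prop:sub_bar}, $B\vert Z$ is a barrier (with base $Z$), and it is contained in $B\vert Y$. Let $s\in B\vert Z$ and suppose $f(s)=y$. Since $s\in B\vert Y$ and $f(s)=y\in Y=b(B\vert Y)$, freeness of $B\vert Y$ gives $f(s)\in s$, i.e. $y\in s\subseteq Z$. This contradicts $y\notin Z$. Hence the image of $f$ on $B\vert Z$ omits $y$, so it is different from $\Nat$, and $B\vert Z$ is thin for $f$.

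The step that needs care, though it is minor, is making sure that freeness refers to the base $b(B\vert Y)$ and that this base really is $Y$. This uses the Base property of the sub-barrier $B\vert Y$ on $Y$ as stated earlier in the paper. Given that, $y\in b(B\vert Y)$ holds, and the freeness condition applies. Everything is uniform: $\Phi$ is the identity and $\Psi$ maps $Y$ to $Y\setminus\{\min Y\}$ (passing through Proposition \ref{prop:sub_bar} if solutions are presented as sub-barriers). This yields $\TS^B\leq_{\sW}\FreeS^B$.
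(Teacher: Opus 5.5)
Your proof is correct and is essentially the paper's argument. The paper likewise uses the identity as forward functional and removes from the free set a nonempty piece whose complement in the free set is still infinite; it then notes that every removed number is an omitted color, since freeness would force it into a barrier element that avoids it. Your choice of removing exactly $\min(Y)$, with a direct argument in place of the paper's proof by contradiction, is a concrete and explicitly uniform instance of the same idea.
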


\begin{proof}
Let $f:B\to \Nat$. Let $X$ be an infinite set such that $X \subseteq b(B)$ and $B\vert X$ is free for $f$.
Let $Y$ be a non-empty subset of $X$ such that $X\setminus Y$ is infinite. 
We claim that $B\vert(X\setminus Y)$ is thin for $f$. 
Assume, by way of contradiction, that for all $n \in \Nat$ there exists an $s_n\in B\vert (X\setminus Y)$ such that $f(s_n) = n$. Take $n\in Y$. Thus, $n \in X$. Since $B\vert X$ is free for $f$, it must be the case that $n \in s_n$, contradicting the fact that $s_n$ was chosen in $X\setminus Y$. 
 \end{proof}

As a corollary of the above reductions and Theorem \ref{thm:Clote} we get the following.

\begin{corollary}
For every computable ordinal $\alpha$, for every computable barrier $B$ of order type at most $\om^{1+\al}$ and every computable coloring $f : B \to \Nat$, there is an infinite $f$-thin set computable in~$\emptyset^{(\al)}$.
\end{corollary}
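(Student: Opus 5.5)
The plan is to combine the reduction $\TS^B \leq_{\sW} \RT^B_2$ (the Proposition in Subsection \ref{sub:thin}) with Clote's upper bound, Theorem \ref{thm:Clote}(1). Fix a computable ordinal $\alpha$, a computable barrier $B$ of order type at most $\om^{1+\al}$, and a computable $f : B \to \Nat$. Define $g : B \to 2$ as in the proof of that Proposition: $g(s)=0$ if $f(s)=0$ and $g(s)=1$ otherwise. This $g$ is computable, since it is obtained uniformly from $f$ by a Turing functional, and $B$ is computable.

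Applying Theorem \ref{thm:Clote}(1) to the computable barrier $B$ and the computable $2$-coloring $g$ gives an infinite $H \subseteq b(B)$, computable in $\emptyset^{(\al)}$, such that $g$ is constant on $B\vert H$. Then $B\vert H$ is thin for $f$. If the constant colour is $1$, then $f$ never takes the value $0$ on $B\vert H$. If it is $0$, then $f$ takes only the value $0$ there, so, for instance, $1$ is omitted. Hence $H$ is an infinite $f$-thin set computable in $\emptyset^{(\al)}$. Here the backward functional $\Psi$ of the strong Weihrauch reduction is the identity, so no additional computational cost is incurred.

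The only point needing care is checking the hypotheses of Theorem \ref{thm:Clote}(1). The reduction must not change the barrier: it uses $B$ itself, not $B^+$. This matters because passing through $\FS^B$ and Theorem \ref{thm:fs_red_rt} would replace $B$ by $B^+$, whose order type is $\om\cdot ot(B)$, and that could worsen the bound. We also need that $b(B)$ is computable when $B$ is, so that the monochromatic set Clote supplies is a set of the expected form. This holds since $x \in b(B)$ iff the $x$-variant-style search finds some $s \in B$ containing $x$, and by the base property such an $s$ exists for every $x \in b(B)$. Otherwise Clote's theorem is applied verbatim, so I expect no real obstacle.
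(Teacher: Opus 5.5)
Your proposal is correct and follows the paper's argument: the corollary is obtained by composing $\TS^B \leq_{\sW} \RT^B_2$ with Theorem~\ref{thm:Clote}(1). That reduction keeps the barrier $B$, and hence its order type, unchanged and uses the identity as backward functional. The one weak point is your side remark on $b(B)$, which is not needed, since Clote's theorem already yields an infinite subset of the base. As you phrased it, an unbounded search for some $s\in B$ containing $x$ only shows that $b(B)$ is c.e. Decidability instead comes from fixing some $s\in B$ with $x<\min(s)$ and checking whether $\{x\}\cup u\in B$ for some proper initial segment $u$ of $s$, by the $k$-variant lemma.
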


\subsection{Barrier rainbow Ramsey theorem}\label{sub:rain}

Galvin's proof of rainbow Ramsey theorem from Ramsey's theorem (see the proof of \cite[Theorem 1.6]{CsimaMileti_2009}) adapts with minor modifications to the barrier version.

\begin{proposition}
For all $k\in\Nat^+$, for all barriers $B$, $\RRT^B_k \leq_{\sW} \RT^B_k$.
\end{proposition}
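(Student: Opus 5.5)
Galvin's argument for $\RRT^n_k\leq_{\sW}\RT^n_k$ carries over almost verbatim once the enumeration of a $k$-bounded color class is replaced by the lexicographic well-order on $B$. Let $f\colon B\to\Nat$ be $k$-bounded. For each $s\in B$, the set $f^{-1}(f(s))$ has at most $k$ elements. List it in $<_\lex$ order and let $g(s)\in\{0,\dots,k-1\}$ be the position of $s$ in this list. Then $g\colon B\to k$ is an instance of $\RT^B_k$. Since $f$ is an instance of $\RRT^B_k$, it is injective on colors in the sense that the map $s\mapsto f^{-1}(f(s))$ is what we need; but we do not want to compute this map, because computing the full preimage $f^{-1}(f(s))$ is not computable from $f$ in general.

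To keep the reduction uniform, I will follow the trick in the proof of \cite[Theorem 1.6]{CsimaMileti_2009}. Fix a computable enumeration of $B$ in order type $\om$, for instance ordered by $\max(s)$ and then lexicographically; $B$ is computable from the instance. Define $g(s)$ to be the number of $t\in B$ that precede $s$ in this enumeration and satisfy $f(t)=f(s)$. Each $t$ in this count has $\max(t)\le\max(s)$, so $g(s)$ is found by a finite search, and $g(s)<k$ by $k$-boundedness. This makes $g$ uniformly computable from $f$ (and $B$).

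Now let $H$ be such that $B\vert H$ is $g$-monochromatic, say with color $c$. Suppose $s\neq t$ lie in $B\vert H$ with $f(s)=f(t)$. One of them precedes the other in the enumeration, say $t$ precedes $s$. Every $u$ that precedes $t$ with $f(u)=f(t)$ also precedes $s$, and $t$ itself precedes $s$. Hence $g(s)\ge g(t)+1$, contradicting $g(s)=g(t)=c$. So $f$ is injective on $B\vert H$, and $H$ itself is a solution; the backward functional is the identity. Together these give $\RRT^B_k\leq_{\sW}\RT^B_k$.

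The only delicate point is the one handled above: the color must be computed without access to full preimages, which is why we count predecessors in a fixed $\om$-enumeration of $B$. I expect no real obstacle beyond confirming that the enumeration by $\max$ makes the search finite. Each level $\{s\in B:\max(s)=m\}$ is finite, since $s\subseteq[0,m]$.
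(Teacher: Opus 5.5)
Your proof is correct and is essentially the paper's argument. The paper fixes an arbitrary $B$-computable bijection $c\colon B\to\Nat$ and sets $g(s)=|\{t\in B : c(t)<c(s) \text{ and } f(t)=f(s)\}|$, which is exactly your predecessor count; your enumeration by $\max(s)$ and then lexicographically is one concrete choice of $c$, and it makes the finiteness of the search explicit.
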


\begin{proof}
Let $f:B \to \Nat$ be $k$-bounded. Fix a $B$-computable bijection $c : B \to \Nat$. 
Define $g:B \to k$ as follows:
$$ g(s) = |\{ t \in B : c(t) < c(s) \text{ and } f(s) = f(t)\}|.$$
The set $\{ t \in B : c(t) < c(s)\}$ is finite and finite coded.
The fact that $g$ is a $k$-coloring follows from the hypothesis that $f$ is $k$-bounded.

Let $H\subseteq\Nat$ be an infinite subset of $b(B)$ such that $g$ is constant on $B\vert H$, as given by $\RT^B_k$. We claim that $B\vert H$ is a rainbow for $f$.

Let $s, t \in B\vert H$. Since $g(s) = g(t)$ and either $c(s) < c(t)$ or $c(t) < c(s)$, we have that $f(s) \neq f(t)$. Thus $B\vert H$ is a rainbow for $f$. 
 \end{proof}

As is the case for the Schreier barrier (see \cite[Proposition 3.11]{carlucci2024ramseyliketheoremsschreierbarrier}) and for $[\Nat]^n$ (see \cite[Theorem 4.2]{wang2014some}), the rainbow Ramsey theorem for $2$-bounded colorings of barriers follows from the free set theorem for
barriers. We don't know if the same holds for $k>2$.

\begin{proposition}
For all barriers $B$, $\RRT^{B}_2 \leq_{\sW} \FreeS^{B} $.
\end{proposition}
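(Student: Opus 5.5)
The plan is to adapt Wang's argument for $[\Nat]^n$ (\cite[Theorem 4.2]{wang2014some}). Given a $2$-bounded $f$, I will build a coloring $g:B\to\Nat$ with the following property: whenever two distinct elements of a sub-barrier share an $f$-color, one of them receives under $g$ a color lying in the other but not in itself. Freeness of the sub-barrier then forbids this. The only difficulty is making $g$ computable from $f$, since in general we cannot search for the partner of $s$. As in the proof of $\RRT^B_k \leq_{\sW} \RT^B_k$, I will handle this with the $B$-computable bijection $c:B\to\Nat$.

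\textbf{Defining $g$.} For $s\in B$, consider the finite, canonically coded set $\{t\in B : c(t)<c(s)\}$. If it contains some $t$ with $f(t)=f(s)$, then by $2$-boundedness $t$ is unique, and I set $g(s)=\min(t\setminus s)$. This is well defined because $t\neq s$ and, by the Sperner property, $t\not\subset s$, so $t\setminus s\neq\emptyset$. Otherwise I set $g(s)=\min(s)$. Then $g$ is uniformly computable from $f$ (and $B$), so $\Phi(f)=g$ is a legitimate instance of $\FreeS^B$.

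\textbf{Verification.} Let $H\subseteq b(B)$ be infinite with $B\vert H$ free for $g$; the backward functional $\Psi$ is the identity. Suppose $s,t\in B\vert H$ are distinct with $f(s)=f(t)$, and say $c(t)<c(s)$. By uniqueness, $t$ is exactly the element found when computing $g(s)$, so $g(s)=\min(t\setminus s)$. This value lies in $t\subseteq H$, hence $g(s)\in b(B\vert H)$, while $g(s)\notin s$. That contradicts freeness of $B\vert H$ for $g$. Hence $f$ is injective on $B\vert H$, which is therefore a rainbow. It is a sub-barrier of $B$ by the earlier remark on $B\vert Y$ and Proposition \ref{prop:sub_bar}.

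\textbf{Main obstacle.} The only delicate point is why $k=2$ is needed, and this is where I would check carefully. Uniqueness of the earlier partner $t$ guarantees that the witness chosen for $g(s)$ is the element of $H$ that collides with $s$. For $k>2$ the earlier partner used to define $g(s)$ might lie outside $H$ while another partner lies inside, and the argument breaks down. Apart from this, the argument only uses the Sperner property and the computability of $c$, so it goes through for arbitrary barriers.
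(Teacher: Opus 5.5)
Your proposal is correct and is essentially the paper's proof: the same coloring $g(s)=\min(t\setminus s)$ for the unique $c$-earlier $t$ with $f(t)=f(s)$, the Sperner property to guarantee $t\setminus s\neq\emptyset$, and freeness of $B\vert H$ to rule out any collision. The only difference is cosmetic: your default value is $\min(s)$ where the paper uses $0$, and this does not affect the argument.
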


\begin{proof}
Let $B$ be a barrier. 
Fix a $B$-computable bijection $c: B \to \Nat$ so that for each $s$ the set $\{t\in B : c(t)<c(s)\}$ is finite and finite coded.
Let $f:B\to\Nat$ be $2$-bounded. For $s \in B$ define $g(s)$ as follows:
$$
g(s)=
\begin{cases}
\min(t\setminus s) & \mbox{ if there exists a } t\in B \text{ s.t. } c(t)<c(s) \text{ and } f(s)=f(t),\\
0 & \mbox{ otherwise.}
\end{cases}
$$
Since $f$ is $2$-bounded, if $t$ exists in the definition of $g$ then it is unique. If $t$ and $s$ are distinct
elements in the barrier $B$ then $(t\setminus s)\neq \emptyset$, since $t\subseteq s$ is impossible. Let $A$ be 
an infinite subset of the base of $B$ such that $B'=B\vert A$ is $g$-free. We claim that $B'$ is a rainbow for $f$. Suppose otherwise, by way of contradiction, 
as witnessed by $s, t \in B'$ such that $f(s) = f(t)$. Without loss of generality we can assume
$c(t) < c(s)$. Then $g(s) = \min(t\setminus s) \in A \setminus s$, contradicting that $B'$ is $g$-free.
 \end{proof}

As a corollary of the above reductions and Theorem \ref{thm:Clote} we get the following.

\begin{corollary}
For every computable ordinal $\alpha$, for every computable barrier $B$ of order type at most $\om^{1+\al}$ and every computable coloring $f : B \to \Nat$, there is an infinite $f$-rainbow computable in~$\emptyset^{(\al)}$.
\end{corollary}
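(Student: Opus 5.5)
This corollary follows by composing reductions already in the paper. Fix a computable ordinal $\alpha$, a computable barrier $B$ of order type at most $\om^{1+\al}$, and a computable coloring $f : B \to \Nat$. As in the definition of $\RRT^B_k$, we assume $f$ is $k$-bounded for some $k\in\Nat^+$, since the notion of rainbow is only guaranteed under this hypothesis. The plan is to apply the reduction $\RRT^B_k \leq_{\sW} \RT^B_k$ from the proposition adapting Galvin's argument, and then invoke Theorem~\ref{thm:Clote}(1) on the resulting instance of $\RT^B_k$.

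\textbf{Step 1.} Since $B$ is computable, we can fix a computable bijection $c : B \to \Nat$ such that for each $s\in B$ the set $\{t\in B : c(t)<c(s)\}$ is computable uniformly in $s$, given as a finite canonical index. One natural choice is to enumerate $B$ in increasing order of some canonical code. Then the coloring
$$g(s) = |\{ t \in B : c(t) < c(s) \text{ and } f(s) = f(t)\}|$$
is computable. By $k$-boundedness it takes values in $k$, so $g : B \to k$ is a computable instance of $\RT^B_k$ on the same barrier $B$.

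\textbf{Step 2.} Theorem~\ref{thm:Clote}(1) applies verbatim to $g$, because the barrier is unchanged and so its order type is still at most $\om^{1+\al}$. It yields an infinite $g$-monochromatic set $H$ computable in $\emptyset^{(\al)}$.

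\textbf{Step 3.} By the proof of the proposition, $B\vert H$ is a rainbow for $f$. The backward functional there is the identity, so $H$ itself is the required $f$-rainbow, and it is computable in $\emptyset^{(\al)}$.

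The only point needing care is Step 1: the reduction must preserve both computability of the instance and the barrier itself. Routing through $\FreeS^B$ instead would pass to $B^+$, whose order type $\om\cdot ot(B)$ could exceed the bound $\om^{1+\al}$. Going directly through $\RT^B_k$ avoids this, since the barrier and hence its order type are unchanged.
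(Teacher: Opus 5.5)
Your proposal is correct and follows the paper's argument: the paper derives this corollary by composing the Galvin-style reduction $\RRT^B_k \leq_{\sW} \RT^B_k$, which keeps the barrier unchanged and uses the identity as backward functional, with part (1) of Theorem~\ref{thm:Clote}. You were also right to make the implicit $k$-boundedness hypothesis on $f$ explicit, since without it the statement fails (for instance, a constant coloring has no infinite rainbow).
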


\section{Lower bounds}\label{sec:low}

The argument used by Jockusch to establish the seminal result that $\RT^2_2$ omits $\Sigma^0_2$-solutions (Theorem 3.1 in \cite{Jockusch1972}) turned out to be extremely versatile. It stands out as the model whose variations allowed to establish a large number of similar omission results for variants of Ramsey's theorem. Among those are the increasing polarized Ramsey theorem, Hindman's finite sums theorem, the free set theorem, the thin set theorem and the rainbow Ramsey theorem. The fertility of Jockusch's construction is not limited to the lower levels of the arithmetical hierarchy. Indeed, Clote \cite{Clote1984} showed how the scheme of Jockusch's proof can be lifted to obtain weak anti-basis results with respect to the hyperarithmetical hierarchy. In particular Clote used this idea to obtain lower bounds for the barrier Ramsey theorem $\RT^B_2$ as a function of the order-type of the barrier $B$ 
as shown in Theorem \ref{thm:Clote} above.

It is thus natural to inquire whether the same scheme of argument applies to the barrier versions of other variants of Ramsey's theorem for which Jockusch's argument works in the finite case. In this section we verify that this is the case for the barrier thin set theorem and for the barrier rainbow Ramsey theorem. 

\subsection{Clote's canonical barriers and generalized Limit Lemma}

Clote's lifting of Jockusch's argument relies on a generalization of Shoenfield's limit lemma proved in \cite{clote1986generalization}. 
Clote’s proof uses limits taken along barrier elements rather than along the natural numbers. The barriers used in the construction are called canonical barriers, and they are defined using notations for computable ordinals in Kleene’s notation system $\KO$ (see \cite{Rogers1987}).

We need the following definition. For barriers $B$ and $B'$ let $B * B' = \{ s \cup t \,:\, s \in B, t\in B' \mbox{ and }\max(s)<\min(t)\}$. Note that if barriers $B$ and $B'$ have the same base, then $B * B'$ is a barrier on that base.

\begin{definition}[Canonical Barriers]
For each ordinal notation $a \in \mathcal{O}$, the canonical barrier $B_a$ is defined by induction as follows.
\begin{enumerate}
    \item $B_1 = \{( \,)\}$, 
\item $B_2 = \{ (n) \,:\, n \in \Nat\}$, 
\item $B_{2^a} = B_2 * B_a$,
\item $B_{3\cdot 5^z} = \bigcup_{n\in\Nat} \{ (n)\} * B_{\varphi_z(n)}*B_{\varphi_z(n-1)}*\dots*B_{\varphi_z(0)}.$
\end{enumerate}
\end{definition}

Recall that if $3\cdot 5^z$ is a notation then $\varphi_z(0), \varphi_z(1), \varphi_z(2),\dots$ are notations for an increasing sequence of ordinals whose limit is the ordinal denoted by $3\cdot 5^z$. Each $B_a$ has base $\Nat$ and order type $\om^{|a|}$ (see \cite{Clote1984}). Note that Clote's canonical barriers are close (but not identical) to so-called {\em exactly} $\alpha$-large sets (a.k.a. $\alpha$-size sets). 

For an ordinal notation $a \in \mathcal{O}$, limits along a canonical barrier are defined inductively as as follows. Let $g$ be a partial computable function, then
$$ \lim_{s \in B_1} g(x,s) = g(x, ()),$$
$$ \lim_{ s \in B_2} g(x, s) = y \text{ if and only if } \exists m \forall n\geq m~~g(x,(n)) =y,$$
$$ \lim_{s\in B_{2^a}} g(x, s) = \lim_{(n) \in B_2}~ \lim_{\substack{t \in B_{a}\\ n<\min(t)}} g(x, (n)\conc t),$$
$$ \lim_{s\in B_{3\cdot 5^z}} g(x, s) = \lim_{\langle n\rangle\in B_2}~~\lim_{\substack{t_n \in B_{\varphi_z(n)} \\ n<\min(t_n)}} \ldots \lim_{\substack{t_0 \in B_{\varphi_z(0)} \\ \max(t_1)<\min(t_0)}}
g(x, (n) \conc t_n\conc \cdots \conc t_0 ).$$
The limits along a canonical barrier correspond, as noted in \cite{clote1986generalization}, to the notion of limit with respect to a generalized Fréchet filter. 

The proof in \cite{clote1986generalization} of the following generalization of the limit lemma uses effective transfinite induction on ordinal notations. The set $H_a$ for $a\in\KO$ is defined as usual and captures the $a$-iteration of the Turing jump (see, e.g., \cite{Rogers1987} for a definition). By a result of Spector \cite{Spector1955a} the sets $H_a$ only depend on the ordinal denoted by $a$, up to Turing equivalence.

\begin{theorem}[Clote \cite{clote1986generalization}]\label{CloteGenLimit}
There exists a partial computable function $g$ such that for all ordinal notations $a\in\mathcal{O}$, for all $e, x \in \Nat$
and $s \in \Nat^{<\om}$, $g(a,e,x,s)$ is defined and if $\varphi_e^{H_a}(x)= y$ then 
$\lim_{s \in B_a} g(a,e,x,s) = y.$
\end{theorem}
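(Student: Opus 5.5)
We prove Theorem~\ref{CloteGenLimit} by effective transfinite induction on notations, combining the recursion theorem with a uniform construction of $g$ following the clauses that define $B_a$ and $H_a$. Concretely, we build, via the recursion theorem, a single partial computable $g(a,e,x,s)$ that dispatches on the form of $a$: $a=1$, $a=2^b$, or $a=3\cdot 5^z$. To get totality we arrange that $g$ is defined for every $s$ by always outputting some value: a default $0$ whenever the relevant computation has not converged or $s$ is not of the expected shape. For $a=1$ we have $H_1=\emptyset$, so $g(1,e,x,())$ just returns $\varphi_e^{\emptyset}(x)$. Since this may diverge, we instead define $g(1,e,x,())$ to run the computation only if it is known to converge. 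Because totality at the base level is problematic, we shift the whole construction by one level: we take $B_2$ as the genuine base case and use the ordinary Shoenfield limit lemma there. For $a=2$ ($H_2\equiv\emptyset'$), set $g(2,e,x,(n))=\varphi_{e,n}^{\emptyset'_n}(x)$ if this converges in $n$ steps, and $0$ otherwise. Whenever $\varphi_e^{\emptyset'}(x)=y$, the use eventually stabilizes, so $\lim_{(n)\in B_2}g=y$.

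For the successor step $a=2^b$, we have $H_{2^b}=H_b'$. We first relativize the base case: an $H_b'$-computation is, by Shoenfield, the limit over $(n)\in B_2$ of $H_b$-computations with $H_b$-approximations to the jump. Each query to $H_b$ (of the form $j\in H_b$, i.e.\ $\varphi_{e_0}^{H_{b'}}$ for the relevant predecessor, uniformly computable from $b$) is in turn given by the induction hypothesis as $\lim_{t\in B_b}g(b,\cdot,\cdot,t)$. So we set $g(2^b,e,x,(n)\conc t)$ to be the result of running $n$ steps of $\varphi_e$ relative to the finite approximation of $H_b'$ whose bits are computed as $g(b,\ldots,t)$ applied to stage-$n$ jump approximations, all evaluated at the same $t$. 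The key point is that only finitely many values $g(b,e_j,x_j,t)$ are consulted for a fixed $n$. Each of them has a limit along $B_b$, and the intersection of finitely many ``eventually along $B_b$'' conditions is again such a condition, because these limits form a filter (the generalized Fréchet filter). Hence the inner limit over $t$ exists and equals the stage-$n$ approximation using the true $H_b$, and the outer limit over $n$ then gives $\varphi_e^{H_{2^b}}(x)$.

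For the limit step $a=3\cdot5^z$, $H_a$ is the effective join of the $H_{\varphi_z(n)}$. A computation $\varphi_e^{H_a}(x)$ queries finitely many columns, so it lies in $\bigoplus_{m\le n}H_{\varphi_z(m)}$ for some $n$, and each such column is $\le_T H_{\varphi_z(n)}$ uniformly. We therefore define $g(a,e,x,(n)\conc t_n\conc\cdots\conc t_0)$ by evaluating each query to column $m\le n$ using $g(\varphi_z(m),\ldots,t_m)$ at its own block, running $n$ steps, and outputting $0$ otherwise. The nested limit in the definition matches: the innermost limit over $t_0$ fixes column-$0$ answers, and so on outward, each stabilizing by the induction hypothesis. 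The outer limit over $n$ stabilizes once $n$ exceeds the use and the step count. Finally, we verify the whole argument by effective transfinite induction on $|a|$, which is legitimate because the recursion theorem supplies a fixed index for $g$ used in its own definition.

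The main obstacle I expect is the bookkeeping in the successor and limit steps. Nested limits along $B_a$ must interchange correctly with finitely many simultaneous limits evaluated at the same barrier element. We also need care that queries for a fixed outer parameter depend only on finitely many inner values, so that the Fréchet-filter intersection property applies. Uniformity of the reductions $H_{\varphi_z(m)}\le_T H_{\varphi_z(n)}$ along $<_{\KO}$ must also be handled explicitly.
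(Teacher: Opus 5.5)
\textbf{The base case $a=1$ is a genuine gap, and it cannot be repaired because the statement is false there.} Your instruction to ``run the computation only if it is known to converge'' is not effective. ``Shifting the construction by one level'' proves nothing about $a=1$; what you actually do is start the induction at $a=2$. Since $\lim_{s\in B_1}g(1,e,x,s)=g(1,e,x,())$, the statement at $a=1$ asks for a total computable map $(e,x)\mapsto g(1,e,x,())$ that agrees with $\varphi_e^{H_1}(x)$ whenever the latter converges. No such map exists. Let $e_0$ be an index for $x\mapsto g(1,x,x,())+1$, which is total by the totality clause. Then
\[
g(1,e_0,e_0,())=\varphi_{e_0}^{H_1}(e_0)=g(1,e_0,e_0,())+1 ,
\]
a contradiction.

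So you should state explicitly that the theorem holds only for $a\neq 1$ (or holds at $a=1$ only if totality is dropped). You then need to check that your induction never uses the hypothesis at $1$ for an arbitrary index $e$. The successor step with $b=1$ is just your Shoenfield case $a=2$, so that part is fine. In the limit step, however, a column with $\varphi_z(m)=1$ is currently answered by your ill-defined $g(1,\dots,t_m)$. It must instead be answered directly, since $H_1=\emptyset$. Alternatively, carry along only the weaker hypothesis that membership in $H_b$ is a $B_b$-limit; this is true at $b=1$, witnessed by the constant $0$.

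Apart from this, your argument is the effective transfinite induction on notations that the paper attributes to Clote, and the successor and limit steps are sound. The paper's iterated limits are exactly limits along nested Fr\'echet-type filters, and these filters contain every tail $\{t:\min(t)>k\}$. Hence the side conditions $n<\min(t)$ and $\max(t_{m+1})<\min(t_m)$ are harmless, and finitely many consulted approximations stabilize simultaneously. Evaluating column $m$ only from its own block $t_m$ is what makes the nesting $\lim_{(n)}\lim_{t_n}\cdots\lim_{t_0}$ work.

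Two small clean-ups:
\begin{itemize}
\item In the successor step you need only the hypothesis at $b$ itself, applied to an index $e_0$ with $\varphi_{e_0}^X=\chi_X$. No predecessor $b'$ enters.
\item Suppose $H_{3\cdot 5^z}$ is taken in Rogers' form $\{\langle u,v\rangle : u<_{\mathcal{O}}3\cdot 5^z,\ v\in H_u\}$ rather than as a join of the $H_{\varphi_z(m)}$. Then a query $\langle u,v\rangle$ must be handled by searching, within $n$ steps, for some $m\le n$ with $u\le_{\mathcal{O}}\varphi_z(m)$, and then using a uniform reduction $H_u\le_T H_{\varphi_z(m)}$. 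This is where uniformity along $<_{\mathcal{O}}$ is genuinely needed. Your block-wise scheme never needs $H_{\varphi_z(m)}\le_T H_{\varphi_z(n)}$.
\end{itemize}
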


We will need the following corollary of the generalized limit lemma (it is proved as a Claim in the proof of Theorem 9 in \cite{Clote1984}).

\begin{lemma}[Clote, \cite{Clote1984}]\label{lem:clote}
For all $X\subseteq\Nat$ infinite, 
for all $a \in \mathcal{O}$,
for all $e,k \in \Nat$, there exists $s \in B_a\vert X$ such that for all $x\leq k$, 
$g(a,e,x,s)= \varphi_e^{H_a}(x)$, if $\varphi_e^{H_a}(x)$ is defined. 
\end{lemma}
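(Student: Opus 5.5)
The plan is to argue directly from Theorem~\ref{CloteGenLimit}, by induction on the notation $a$, unfolding the iterated limit one quantifier at a time inside $X$. Fix $X$, $e$, $k$. Let $D\subseteq\{0,\dots,k\}$ be the finite set of $x\le k$ for which $\varphi_e^{H_a}(x)$ is defined. For $x\in D$ write $y_x$ for the value. Theorem~\ref{CloteGenLimit} gives $\lim_{s\in B_a} g(a,e,x,s)=y_x$ for each $x\in D$. The undefined $x$ impose no constraint, so it suffices to find a single $s\in B_a\vert X$ with $g(a,e,x,s)=y_x$ for every $x\in D$ simultaneously.

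The general statement I will prove by induction on $a\in\KO$ is the following. For any partial computable (total on the relevant arguments) $h$, any finite set $D$, and any values $(y_x)_{x\in D}$ with $\lim_{s\in B_a}h(x,s)=y_x$ for all $x\in D$, and any infinite $X$, there is $s\in B_a\vert X$ with $h(x,s)=y_x$ for all $x\in D$.

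The cases go as follows.
\begin{enumerate}
\item For $a=1$ this is trivial, since $s=()$ and the limit is just the value.
\item For $a=2$, each $x\in D$ has a threshold $m_x$ beyond which $h(x,(n))=y_x$. Take $n\in X$ above $\max_x m_x$. This is possible since $D$ is finite and $X$ is infinite.
\item For $a=2^b$, the outer limit gives, for each $x\in D$, that for all sufficiently large $n$ the inner limit $\lim_{t\in B_b,\,n<\min t} h(x,(n)\conc t)$ equals $y_x$. Choose $n\in X$ beyond all these thresholds. Then apply the induction hypothesis to the function $t\mapsto h(x,(n)\conc t)$ and the infinite set $X/n$, which yields $t\in B_b\vert(X/n)$. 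Then $(n)\conc t\in B_a\vert X$.
\item For $a=3\cdot 5^z$, argue in the same way. First choose $n\in X$ large enough that the nested limit for each $x\in D$ equals $y_x$. Then peel off the limits over $B_{\varphi_z(n)},\dots,B_{\varphi_z(0)}$ in order, each time applying the induction hypothesis on the tail of $X$ beyond the previous block. Each of $\varphi_z(i)$ is a notation below $a$, so the hypothesis applies.
\end{enumerate}
Concatenating the pieces gives an element of $B_a$ by the definitions of $*$ and of $B_{3\cdot 5^z}$. Its elements lie in $X$, so it belongs to $B_a\vert X$.

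The one delicate point, which I expect to be the main obstacle, is the intermediate stages of the nested limits. After fixing $n$ and some blocks, the remaining inner limits must exist and equal $y_x$ for each $x\in D$. The definition of $\lim_{B_2}$ only says the outer value stabilizes, which means the inner limit exists and equals $y_x$ for all sufficiently large $n$. So I will state the induction hypothesis exactly in the form "inner limit exists and equals $y$". At each stage I choose the next element of $X$ beyond the finitely many thresholds for the finitely many $x\in D$, and this finiteness is what makes the simultaneous choice possible. Applying the general claim with $h(x,s)=g(a,e,x,s)$ yields the lemma.
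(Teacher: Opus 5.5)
Your argument is correct. The paper gives no proof of this lemma. It quotes it as the Claim inside the proof of Theorem~9 of \cite{Clote1984}, and it adds without argument that, ``from the proof in \cite{clote1986generalization}'', $s$ can also be taken with $k<\min(s)$. Your proposal is therefore a self-contained replacement for that citation rather than a reconstruction of an argument in the paper. You induct on notations $a\in\KO$ and unfold the iterated limit of Theorem~\ref{CloteGenLimit} one quantifier at a time inside $X$, using the finiteness of $D$ to clear all thresholds at once. In effect this shows that $B_a\vert X$ meets every finite intersection of the sets on which the nested limits have stabilized.

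One repair is needed in how you state the induction hypothesis. In the limit step you apply it at $\varphi_z(n)$ to the function $t_n\mapsto\lim_{t_{n-1}}\cdots\lim_{t_0}h(x,(n)\conc t_n\conc\cdots\conc t_0)$, which is neither computable nor total. So state the hypothesis for arbitrary partial functions $h$, with ``$\lim=y$'' meaning that all relevant inner limits exist and equal $y$. Your proof never uses computability, so nothing else changes. Similarly, the inner limits you obtain range over $\{t\in B_b : n<\min t\}$ rather than all of $B_b$. This is harmless, because for $b\neq 1$ the outermost limit along $B_b$ is an ``eventually'' condition.

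Your route also gives for free the strengthening the paper later needs in Theorems~\ref{ThinNoCom} and~\ref{lb:rain}. Choose the first element $n\in X$ to be above $k$ as well; then $\min(s)=n>k$ whenever $a\neq 1$.
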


From the proof in \cite{clote1986generalization} it is clear that in the above lemma the element $s\in B_a$ can be chosen so as to satisfy $k < \min(s)$. This will be useful in some later proofs. 

Clote \cite{Clote1984} used the above results to prove anti-basis results for Ramsey's theorem for barriers. With the same technique, we will obtain (weak) anti-basis results for the free set, thin set and rainbow Ramsey theorems 
for barriers. 

\subsection{Omission results}

We start with the barrier thin set theorem. 
The proof of the following theorem by Cholak et al. (Theorem 4.1 in \cite{Cholak_Giusto_Hirst_Jockusch_2005}) is a variation on Jockusch's proof of the analogous omitting theorem for Ramsey for pairs (Theorem 3.1 in \cite{Jockusch1972}).

\begin{theorem}[Cholak et al. \cite{Cholak_Giusto_Hirst_Jockusch_2005}]
There is a computable function $f:[\Nat]^2 \to \Nat$ such that no infinite $\Sigma^0_2$ set is thin for $f$. 
\end{theorem}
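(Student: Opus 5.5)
We will prove the statement with Jockusch's diagonalization: build a computable $f:[\Nat]^2\to\Nat$ that defeats every $\Sigma^0_2$ set. Fix an enumeration $W^{\emptyset'}_0,W^{\emptyset'}_1,\dots$ of the $\Sigma^0_2$ sets, i.e.\ the sets c.e.\ in $\emptyset'$. By the limit lemma, each $W^{\emptyset'}_e$ has a uniformly computable approximation along the second coordinate. An infinite $\Sigma^0_2$ set contains an infinite $\Delta^0_2$ subset, and any infinite subset of a thin set is thin. So it suffices to defeat every infinite $\Delta^0_2$ set $Y_e=\{x:\lim_t g(e,x,t)=1\}$, where $g$ is a uniformly computable $\{0,1\}$-valued approximation. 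The requirement $R_e$ will state: if $Y_e$ is infinite, then for every color $c$ there is a pair in $[Y_e]^2$ of color $c$. That is, $f$ restricted to $[Y_e]^2$ is surjective, so $Y_e$ is not thin.

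To organise the requirements, we split the colors into blocks. Requirement $R_{\langle e,c\rangle}$ takes responsibility for producing color $c$ inside $[Y_e]^2$. We give it a fixed finite "use" of small numbers: at stage $t$ it reads the first $2\langle e,c\rangle+2$ elements of the current approximation $Y_{e,t}=\{x\le t: g(e,x,t)=1\}$. The definition of $f(x,t)$ for $x<t$ then goes as follows. Each $x$ is assigned to at most one requirement of index $\le x$. If $x$ is currently among the least $2i+2$ elements of $Y_{e,t}$, where $i=\langle e,c\rangle$, and $x$ has not yet been claimed by a higher-priority requirement at that stage, we set $f(x,t)=c$. Otherwise $f(x,t)$ takes a default value that avoids the relevant colors.

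A cleaner way to carry this out is to copy Jockusch's proof directly. Number the requirements $R_i$ for $i=\langle e,c\rangle$. At stage $t$, let $x_{i,t}$ be the $(i+1)$-st element of $Y_{e,t}$ that is greater than the elements already chosen for $R_0,\dots,R_{i-1}$ at stage $t$. For each $x$, set $f(x,t)=c$ if $x=x_{i,t}$ for the least such $i$, and set $f(x,t)=0$ otherwise. This $f$ is computable.

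Now suppose $Y_e$ is infinite. Since the approximation converges on each finite initial segment, each $x_{i,t}$ stabilizes to some $x_i\in Y_e$ by induction on $i$. Then for all sufficiently large $t\in Y_e$ we have $f(x_i,t)=c$, so color $c$ appears in $[Y_e]^2$. The case of color $0$ is handled separately, either by reserving it or by observing that default pairs in $Y_e$ exist. Hence $Y_e$ is not thin.

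The main obstacle is the priority bookkeeping. Distinct requirements must choose distinct points, and the stabilization argument must survive when some $Y_{e'}$ with $e'<e$ is finite, in which case $x_{i',t}$ may fail to exist. We handle this by letting each requirement choose its point independently of the others, using the $(i+1)$-st element of its own $Y_{e,t}$ above $i$. Conflicts between requirements are then only finitely many at each $x$, since every $x$ is considered only by the requirements of index at most $x$, and the least one wins. Because only the requirements with index below $i$ can block $R_i$, and each of them eventually stabilizes or disappears, $R_i$ obtains a fixed witness $x_i\in Y_e$ after finitely many changes.
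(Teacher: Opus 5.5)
Your overall plan is the right one, and it is the argument the paper lifts to barriers in Theorem~\ref{ThinNoCom}; take $B_a=B_2$, so that $B_2*B_a=[\Nat]^2$ and $H_a=\emptyset'$. You pass from $\Sigma^0_2$ to $\Delta^0_2$ subsets (this step is correct), approximate $Y_e$ via the limit lemma, and let requirement $\langle e,c\rangle$ put color $c$ on a pair inside $Y_e$ at a stage where the approximation is correct.

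The gap is in the priority bookkeeping, and none of your three variants closes it. In the variant you settle on, $R_i$ (with $i=\langle e,c\rangle$) has a single candidate $x_{i,t}$, the $(i+1)$-st element of $Y_{e,t}$ above $i$. It loses that point whenever some $R_j$ with $j<i$ has $x_{j,t}=x_{i,t}$. Nothing prevents a higher-priority $R_j$, $j=\langle e',c'\rangle$, attached to a different infinite (even computable) set $Y_{e'}$, from stabilizing on exactly the same point $x_j=x_i$; a sufficiently sparse $Y_{e'}$ does this easily. Then $f(x_i,t)=c'$ for all large $t$, so your claim that $f(x_i,t)=c$ for all large $t\in Y_e$ fails. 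Saying that ``each of them eventually stabilizes or disappears'' does not help: stabilizing on $x_i$ is precisely the bad case, and $R_i$ has no other witness to fall back on.

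Your other two variants fail for related reasons:
\begin{itemize}
\item If $x_{i,t}$ must exceed the points chosen by $R_0,\dots,R_{i-1}$, a higher-priority requirement with a finite or partial index can choose spurious large points with $x_{j,t}\to\infty$. This drags $x_{i,t}$ along, so it never stabilizes.
\item If each $R_j$ may claim up to $2j+2$ points, then for $i\geq 2$ the requirements $R_0,\dots,R_{i-1}$ can claim $i^2+i\geq 2i+2$ points and exhaust all of $R_i$'s candidates.
\end{itemize}

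The missing idea is the pigeonhole device used in the paper's proof of Theorem~\ref{ThinNoCom}. At stage $t$, let $P_{i,t}$ be the first $i+1$ elements of $Y_{e,t}$ below $t$, if they exist. Process the requirements in order $i=0,1,\dots$, and let $R_i$ claim only the \emph{least} element of $P_{i,t}$ not already claimed at this stage, giving it color $c$. Each requirement claims at most one point per stage, so at most $i$ points are taken before $R_i$ acts. Hence one of its $i+1$ candidates is always free, whatever the higher-priority requirements (finite, partial, or otherwise) do.

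The verification is then stage-by-stage and needs no stabilization of witnesses at all. Let $P_i$ be the first $i+1$ elements of $Y_e$. If $Y_e$ is infinite, pick $t\in Y_e$ with $t>\max P_i$, late enough that $g(e,x,t)=1$ exactly for $x\in Y_e$, for all $x\le\max P_i$. Then $P_{i,t}=P_i\subseteq Y_e$, so whichever $m$ the requirement $R_i$ takes at stage $t$ satisfies $m,t\in Y_e$ and $f(m,t)=c$. Color $0$ needs no separate treatment, since it is produced by the requirements $\langle e,0\rangle$ like any other color.
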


Using Clote's generalized limit lemma (Theorem \ref{CloteGenLimit} above) we obtain the following. 

\begin{theorem} \label{ThinNoCom}
For all $a \in \mathcal{O}$ there is a computable function $f: B_2 * B_a \to \Nat$ with no infinite thin set
computable in $H_a$. 
\end{theorem}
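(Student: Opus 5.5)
We lift the Cholak et al. proof (itself a variant of Jockusch's) by replacing the ordinary limit lemma with Clote's generalized limit lemma (Theorem \ref{CloteGenLimit}) and Lemma \ref{lem:clote}.

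\textbf{Target.} An element of $B_2 * B_a$ has the form $(n)\conc s$ with $s\in B_a$ and $n<\min(s)$. The $H_a$-computable infinite sets are among the $\varphi_e^{H_a}$ that are total and $\{0,1\}$-valued. It suffices to make $f$ defeat each such $e$: whenever $W_e:=\{x:\varphi_e^{H_a}(x)=1\}$ is infinite, the image of $f$ on $(B_2*B_a)\vert W_e$ must be all of $\Nat$.

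\textbf{Construction.} Fix a computable bijection $\langle e,i\rangle$ and use color $\langle e,i\rangle$ to handle requirement $e$ for every $i$. Then each $e$ gets infinitely many colors, and every color belongs to exactly one $e$. For $(n)\conc s\in B_2*B_a$, read off from $s$ the finite approximations
\[
  D^s_e=\{x\le n: g(a,e,x,s)=1\},
\]
for each $e\le n$. Here $g$ is the function of Theorem \ref{CloteGenLimit}; it is total, so $f$ is computable. Define $f((n)\conc s)$ in the Jockusch--Cholak style: if $n\in D^s_e$ for some $e$, let $i$ be the position of $n$ in $D^s_e$, i.e.\ $i=|D^s_e\cap[0,n)|$. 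Choose the least $e\le n$ for which this makes sense and for which the color $\langle e,i\rangle$ has not already been hit in a way that helps; otherwise use some fixed default color. The aim is that whenever the approximation $D^s_e$ is correct on $[0,n]$ and $n\in W_e$, $f$ produces a color indexed by $e$ and determined by the position of $n$ in $W_e$. Since there are only finitely many $e\le n$, each candidate color $\langle e,i\rangle$ needs $i\le n$. To avoid a collision between different $e$'s I will instead encode everything in a single color $\langle e,i\rangle$ with $e$ chosen canonically, namely the least $e\le n$ such that $n\in D^s_e$ and $D^s_e\cap[0,n]$ has exactly $i$ elements below $n$.

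\textbf{Verification.} Suppose $W_e$ is infinite and consider the target color $\langle e,i\rangle$. Let $n$ be the $i$-th element of $W_e$, with $n$ large enough that $n\ge e$ (enlarge by taking larger $i$ if necessary). By Lemma \ref{lem:clote}, together with the remark that $s$ can be chosen with $k<\min(s)$, there is $s\in B_a\vert W_e$ with $n<\min(s)$ such that $g(a,e',x,s)=\varphi_{e'}^{H_a}(x)$ for all $x\le n$ and all $e'\le e$. The lemma is stated for a single $e$. I will apply it to a single index $e^*$ that codes the join of $\varphi_{e'}^{H_a}$ for $e'\le e$, and build $f$ to use $g(a,e^*,\cdot,s)$ uniformly, so that the lemma covers all of them at once. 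Then $(n)\conc s\in (B_2*B_a)\vert W_e$, and $D^s_{e'}$ is correct on $[0,n]$ for all $e'\le e$.

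\textbf{Main obstacle.} The hard part is that the color $\langle e,i\rangle$ must be reached on $W_e$ itself. A smaller $e'$ for which $n$ also lies in $W_{e'}$ could grab the value. The fix, following Cholak et al., is to give up on hitting a specific color and instead make $f$ hit \emph{every} color on $W_e$. Any color not hit by $W_e$ must have the form $\langle e'',j\rangle$. For each pair $(e,j)$, choose $n$ so that $f$ outputs $\langle e,j\rangle$: let $f$ output, for $n\in D^s_e$, the color $\langle e, m\rangle$ with $m$ coded by the position of $n$ among the elements of $D^s_e$ that are not claimed by smaller indices.

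Handling this priority bookkeeping is where I expect the real work. A cleaner alternative is to let $f((n)\conc s)$ depend only on $e=$ the least index such that $n$ is the $i$-th element of $D^s_e$ with $e<i$. With this choice, for infinite $W_e$, all sufficiently late positions $i$ are decided correctly, and every color $\langle e,i\rangle$ with $i$ large is realized. The colors with small $i$ are handled by a finite modification, namely by replacing $\langle e,i\rangle$ by an injective enumeration of all colors as $i$ ranges. After that, each infinite $H_a$-computable set sees all colors and so is not thin.
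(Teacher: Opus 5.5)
There is a genuine gap: the proposal never settles on a coloring whose verification goes through, and both devices you commit to fail.

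\textbf{Colors versus requirements.} Reserving color $\langle e,i\rangle$ for index $e$ (``every color belongs to exactly one $e$'') aims at the wrong target. $W_e$ fails to be thin only if it realizes \emph{every} color. Your verification only tries to put the column $\{\langle e,i\rangle : i\in\Nat\}$ on $W_e$, and nothing puts $\langle e',j\rangle$ with $e'\neq e$ there. For instance, if $W_{e'}=\emptyset$, such a color can only come from incorrect approximations, about which you know nothing. The requirements should be $R_{e,i}$: ``$W_e$ realizes color $i$'', with $\langle e,i\rangle$ serving as a priority, not as the color.

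\textbf{Pointwise coloring.} More seriously, no pointwise rule in which $f((n)\conc s)$ is dictated by the least index whose approximation contains $n$ can be verified. At a stage $s$ correct for all $e'\le e$, any $n\in W_e\cap W_{e'}$ with $e'<e$ receives the color prescribed by $e'$ and by the position of $n$ in $W_{e'}$. For example, if $W_0=\Nat$, then at every such stage each $n\ge 1$ is the $n$-th element of $D^s_0=[0,n]$, so the side condition $e<i$ holds for index $0$. Index $0$ therefore claims every such $n$, and at the very stages your verification relies on, $W_e$ sees only the colors attached to $(0,n)$ for $n\in W_e$. The claim that ``all sufficiently late positions $i$ are decided correctly'' is false. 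The join trick for simultaneous correctness does not help, because correctness of the smaller indices is exactly what lets them steal $n$. Your closing ``finite modification'' (making colors depend on $i$ alone) addresses at most the first problem, not this one.

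\textbf{Missing idea.} What is needed, and what the paper does, is stage-wise allocation with a counting argument.
\begin{itemize}
\item At stage $s=(s_1,\dots,s_r)\in B_a$, define $f((m)\conc s)$ for all $m<s_1$ at once, handling the pairs with $\langle e,i\rangle<s_1$ in increasing order.
\item For each pair, let $F_{e,i,s}$ be the first $\langle e,i\rangle+1$ numbers $x<s_1$ with $g(a,e,x,s)=1$, if they exist. Give color $i$ to the least element of $F_{e,i,s}$ not yet colored at this stage.
\item At most $\langle e,i\rangle$ elements were colored earlier in the stage, so such an element always exists. Every $m<s_1$ left uncolored gets a default color.
\end{itemize}
For the verification, let $X$ be infinite with characteristic function $\varphi_e^{H_a}$, and let $F_{e,i}$ be its first $\langle e,i\rangle+1$ elements. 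Lemma \ref{lem:clote} with $k=\max F_{e,i}$, together with the remark that one may take $k<\min(s)$, yields $s\in B_a\vert X$ that is correct for index $e$ on $[0,k]$. Then $F_{e,i,s}=F_{e,i}\subseteq X$, so some $m\in X$ has $f((m)\conc s)=i$. The pigeonhole count makes requirement $\langle e,i\rangle$ immune to whatever the other requirements did at stage $s$. Hence only correctness for the single index $e$ is needed, and no join is required.
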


\begin{proof}
We want to define a computable function $f:B_2 * B_a \to \Nat$ such that for all $e,i\in\Nat$, if $X\subseteq\Nat$ is infinite and $X = \varphi_e^{H_a}$ then $i \in f([X]^{<\om} \cap (B_2*B_a))$, i.e. $(B_2*B_a)\vert X$ is not thin for $f$.

From Lemma \ref{CloteGenLimit} we have that for all $a \in \mathcal{O}$, for all 
$e, x \in \Nat$ and for all $s \in [\Nat]^{<\om}$, $g(a,e,x,s)$ is defined, $\{0,1\}$-valued and, moreover,
$$\varphi_e^{Ha}(x)\downarrow \,\Longrightarrow\, \lim_{s\in B_a} g(a,e,x,s)=1.$$
For each $e\in\Nat$, define $A_e$ as the set with characteristic function $\varphi_e^{H_a}$ if the latter is a characteristic function, and let $A_e$ be undefined otherwise. 
Denote by $F_{e,i}$ the set of least $\langle e,i\rangle+1$ elements of $A_e$, if $A_e$ is defined and has more than $\langle e,i\rangle$ elements. 
Let $F_{e,i,s}$ be the natural approximation of $F_{e,i}$ at stage $s$ where $s=(s_1, \ldots ,s_n)\in B_a$; that is if there are more than $\langle e,i\rangle $ numbers $x<s_1$ with $g(a,e,x,s) = 1$ then 
$F_{e,i,s}$ consists of the first $\langle e,i\rangle +1$ such numbers, otherwise let  $F_{e,i,s}$ be undefined.\smallskip

The construction of $f$ is carried out in stages $s=(s_1 \ldots ,s_n)\in B_a$ and $f(m,s)$ is defined at stage $s$ for each $m<s_1$. At stage $s=(s_1, \ldots, s_n) \in B_a$, with $s_1<\ldots <s_n$, there are substages $0,1,\ldots, \langle e,i\rangle, \ldots, s_1$ and $f$ is defined on at most one new argument at each substage.

\medskip
Substage $\langle e,i \rangle < s_1$ of stage $s$: if $F_{e,i,s}$ is not defined, continue to the next substage without doing anything. 
If $F_{e,i,s}$  is defined, let $m_{e,i,s}$ be the smallest element $m$ of $F_{e,i,s}$ with $f(m, s)$ not yet defined, and set $f(m_{e,i,s}, s):= i$.\smallskip

Substage $s_1$ of stage $s$: let $f(m,s):=1$ for all $m<s_1$, for which $f(m,s)$ is not yet defined. 
This completes the construction. 

\medskip
Now let $X\subseteq\Nat$ be an infinite, $H_a$-computable set. Let $\varphi_e^{H_a}$ be its characteristic function.
We show that $(B_2*B_a)\vert X$ is not thin for $f$.

If $X$ is as above then by Lemma \ref{lem:clote}, for all $i\in\Nat$, letting  $k=\max(F_{e,i})$,
there exists $ s \in B_a$ such that
$$(\forall x\leq k)(k < \min(s) \text{ and } g(a,e,x,s) = \varphi_e^{H_a}(x)).$$

By construction there exists $m\in X$ with $m<s_1$ where $f(m,s) = i$. Then $i \in f([X]^{<\om} \cap (B_2*B_a))$, thus $X $ is not thin for $f$ relative to the barrier $B_2*B_a$.
  \end{proof}

We now turn to the case of the barrier Ramsey theorem. We proceed as for the thin set theorem, using the proof of the following results by Csima and Mileti (\cite{CsimaMileti_2009}, Theorem 2.4) as a template. 

\begin{theorem}[Csima and Mileti]
There exists a computable $2$-bounded function $f:[\Nat]^2 \to \Nat$ such that no infinite $\Sigma^0_2$ set is a rainbow for $f$. 
\end{theorem}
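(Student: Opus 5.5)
We build a computable $2$-bounded $f:[\Nat]^2\to\Nat$ by a Jockusch-style finite-injury construction with one requirement per $\Sigma^0_2$ set. Since $\Sigma^0_2$ sets are exactly those c.e.\ in $\emptyset'$, it suffices to meet, for each $e$, the requirement
\[R_e:\ \text{if } W_e^{\emptyset'} \text{ is infinite, then it is not a rainbow for } f.\]
An infinite $\Sigma^0_2$ set $W_e^{\emptyset'}$ has a $\emptyset'$-computable enumeration, so by the limit lemma there is a computable approximation $W_{e,s}$ to it. Under the limit-lemma approximation, the first $2e+2$ elements (in order of enumeration) eventually stabilize. We write $F_{e,s}$ for the stage-$s$ guess at the first $2e+2$ enumerated elements, all of them less than $s$. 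We fix in advance a pairing that makes $f$ $2$-bounded: at stage $s$ we define $f(x,s)$ for all $x<s$, assigning each pair either a fresh color never used before or a color shared with exactly one other pair $(y,s)$ with $y<s$. Any color used twice then labels exactly two pairs with the same second coordinate, so $f$ is $2$-bounded.

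At stage $s$ we handle $e=0,1,\dots$ in order, for $e$ with $F_{e,s}$ defined. We pick two elements $x<y$ of $F_{e,s}$ such that neither $f(x,s)$ nor $f(y,s)$ has been defined at this stage. Such a pair exists because $|F_{e,s}|=2e+2$ and the at most $e$ higher-priority requirements have together consumed at most $2e$ elements at stage $s$. We then set $f(x,s)=f(y,s)$ equal to a fresh color. All remaining $x<s$ receive distinct fresh colors.

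For verification, let $W=W_e^{\emptyset'}$ be infinite and let $F_e$ be the limit of $F_{e,s}$, so that $F_{e,s}=F_e$ for all $s\ge s_0$. We choose $s\in W$ with $s>\max(s_0,\max F_e)$, which is possible because $W$ is infinite. At stage $s$ the construction chooses $x<y$ in $F_e\subseteq W$ with $f(x,s)=f(y,s)$. Then $\{x,s\}$ and $\{y,s\}$ are two distinct pairs from $[W]^2$ with the same color, so $W$ is not a rainbow.

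The main obstacle is the limit approximation of a $\Sigma^0_2$ set, which is only c.e.\ in $\emptyset'$ rather than $\Delta^0_2$. The enumeration fixes this. The finitely many elements enumerated first by a $\emptyset'$-machine have a computable approximation that converges: the guess at stage $s$ runs $\varphi_e$ with oracle $\emptyset'_s$ for $s$ steps. The guesses may change finitely often, but they settle once the oracle queries used in those computations have settled. Some care is needed to make sure the guessed elements are genuinely in $W$ in the limit, and this is the point to check. It holds because the limit guess is a true $\emptyset'$-computation.
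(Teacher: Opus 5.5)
Your proof is correct, and its combinatorial core is the same Csima--Mileti construction that the paper lifts to barriers in Theorem~\ref{lb:rain}. At stage $s$ each requirement $e$ takes two still-unclaimed members of a $(2e+2)$-element guess and gives $\{x,s\}$ and $\{y,s\}$ a common fresh color; the verification then chooses a large $s$ inside the set. The difference lies in what is approximated. The template diagonalizes against sets \emph{computable} in the oracle: it approximates characteristic functions by the (generalized) limit lemma and uses the first $2e+2$ elements in numerical order. In the finite case this directly gives only that no infinite $\Delta^0_2$ set is a rainbow. Passing to $\Sigma^0_2$ then uses the further fact that every infinite $\Sigma^0_2$ set has an infinite $\Delta^0_2$ subset, together with the closure of rainbows under subsets. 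You instead treat $\Sigma^0_2$ sets as c.e.\ in $\emptyset'$ and use the first $2e+2$ elements in enumeration order. That is a finite object fixed by finitely many oracle bits, so no subset lemma is needed. The price is that convergence of $F_{e,s}$ must be checked by hand, and you should state it precisely. Suppose the true enumeration completes $F_e$ at step $T$. Then for every $s\geq T$ with $\emptyset'_s\upharpoonright T=\emptyset'\upharpoonright T$, the first $T$ steps of the enumeration relative to $\emptyset'_s$ coincide with the true ones, using the usual convention that a $T$-step computation queries only oracle bits below $T$. This both confirms the members of $F_e$ and rules out spurious earlier elements. Finally, drop the sentence claiming that the limit lemma yields a computable approximation to $W_e^{\emptyset'}$ itself, which fails for sets that are not $\Delta^0_2$; your argument only ever uses convergence of $F_{e,s}$.
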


We extend this result to barriers in the spirit of Clote \cite{Clote1984}.

\begin{theorem}\label{lb:rain}
For all $a \in \mathcal{O}$ there is a $2$-bounded computable $f: B_2 * B_a \to \Nat$ with no infinite rainbow
computable in $H_a$. 
\end{theorem}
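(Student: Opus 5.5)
We will adapt the Csima--Mileti construction for $2$-bounded colorings of pairs, lifting it to the barrier $B_2 * B_a$ exactly as in the proof of Theorem \ref{ThinNoCom}, with the approximation to $H_a$-computable sets supplied by Clote's generalized limit lemma (Theorem \ref{CloteGenLimit}) and Lemma \ref{lem:clote}. As there, let $A_e$ be the set with characteristic function $\varphi_e^{H_a}$, when this is a total $\{0,1\}$-valued function. For a stage $s=(s_1,\ldots,s_n)\in B_a$, let $A_{e,s}=\{x<s_1 : g(a,e,x,s)=1\}$ be the approximation. The requirement $R_e$ says: if $A_e$ is infinite, then $(B_2*B_a)\vert A_e$ is not a rainbow for $f$. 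To meet $R_e$, it suffices to find two distinct elements $(m,s),(m',s)$ of $(B_2*B_a)\vert A_e$ with the same $B_a$-part $s$ and $f(m,s)=f(m',s)$.

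\textbf{Construction.} We define $f$ in stages $s\in B_a$, enumerated in some computable order. At stage $s$ we assign the values $f(m,s)$ for all $m<s_1$. There are substages $e<s_1$, processed in order. At substage $e$, we look at $A_{e,s}$. If it contains two numbers $m<m'$ such that $f(m,s)$ and $f(m',s)$ are still undefined, we take the least such pair and give both the same fresh color, one never used before. At the end of the stage, every $m<s_1$ still without a value receives its own fresh color. Then every color is used at most twice, so $f$ is $2$-bounded, and $f$ is plainly computable.

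\textbf{Verification.} Suppose $X=A_e$ is infinite and $H_a$-computable. Choose $k$ large enough that $X\cap[0,k]$ contains at least $2e+2$ elements. By Lemma \ref{lem:clote}, in the strengthened form with $k<\min(s)$, there is $s\in B_a\vert X$ such that $g(a,e,x,s)=\varphi_e^{H_a}(x)$ for all $x\le k$. The barrier $B_a$ has base $\Nat$, so we may also take $s_1>k$ and $e<s_1$. Then $A_{e,s}\cap[0,k]=X\cap[0,k]$.

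At stage $s$, each of the earlier substages $e'<e$ uses at most two elements of this set. So at least two elements of $X\cap[0,k]$ remain free at substage $e$, and they receive a common color. One caveat: the least free pair chosen at substage $e$ could involve elements of $A_{e,s}$ above $k$, where the approximation may be wrong. To avoid this, we modify the construction to choose the pair among the least free elements of $A_{e,s}$. Since $A_{e,s}$ agrees with $X$ below $k$, the two least free elements of $A_{e,s}$ are then the two least free elements of $X\cap[0,k]$, and both lie in $X$.

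Finally, we must check that $(m,s),(m',s)$ really belong to $(B_2*B_a)\vert X$. This holds because $m,m'\in X$, $m,m'<\min(s)$, and $s\subseteq X$. The latter is exactly why we need $s\in B_a\vert X$, which Lemma \ref{lem:clote} provides. Hence $f(m,s)=f(m',s)$ with $(m,s)\neq(m',s)$, and $X$ is not a rainbow for $f$.

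\textbf{Main obstacle.} The delicate step is the bookkeeping inside a single stage. We must make sure that the pair chosen for $R_e$ lies inside the region where the approximation is already correct, i.e.\ inside $[0,k]$, and that the higher-priority substages $e'<e$ cannot exhaust that region. Choosing $k$ so that $|X\cap[0,k]|\ge 2e+2$, and always selecting the least free pair, handles both issues.
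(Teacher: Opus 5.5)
Your proposal is correct and follows essentially the same route as the paper. Both use Clote's approximation $g(a,e,x,s)$ and stages $s\in B_a$ with substages $e<s_1$ that give the two least unclaimed elements of the approximation a common color (the paper uses $\langle m,s\rangle$ where you use fresh colors), and both verify via Lemma \ref{lem:clote} applied to the first $2e+2$ elements of $X$. Your ``caveat'' is already handled by choosing the least unclaimed pair, and your explicit check that $s\in B_a\vert X$ (so that $(m,s),(m',s)\in(B_2*B_a)\vert X$) makes precise a point the paper leaves implicit.
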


\begin{proof}
We want to show that for all $e$, if $\varphi_e^{H_a}$ is the characteristic function of an an infinite subset $X$ of $b(B_a)$\footnote{ By definition it should be a subset of the base of $B_2 * B_a$ but the base of $B_2$ is $\Nat$.}
then $X$ is not a rainbow for $f$, in the sense that $f$ is not injective on $(B_2*B_a)\vert X$. 
To this aim it is sufficient to show that there are $\ell,m \in X$ and $s \in B_a$ such that $m < \ell < \min(s)$ for which $f(m, s) = f(\ell, s)$. 

Let $g$ be the partial computable function given by Theorem \ref{CloteGenLimit}.

We define a $2$-bounded computable $f:B_2 * B_a \to \Nat$ in stages, where the stages are elements of the barrier $B_a$. For each $s= (s_1,\dots, s_n)\in B_a$ 
at stage $s$ there are substages $0, \dots, e, \dots, s_1$. 

At substage $e < s_1$, if there exist $m < \ell < s_1$ such that $g(a,e,m,s) = g(a,e,\ell,s)=1$ which have not been claimed during stage $s$ then claim the least such $m$ and $\ell$ and set
$$ f(m,s) := \langle m, s\rangle =: f(\ell,s).$$ 
Otherwise, do nothing at this substage. Continue to the next substage.

At substage $s_1$ set $f(\ell,s) := \langle \ell, s\rangle$ for all $\ell < s_1$ for which $f(\ell,s)$ is not yet defined.

Notice that $f$ is computable and $2$-bounded by construction. The construction above is such that, for $s\in B_a$, the value $f(\ell,s)$ is defined
for all $\ell < s_1$ at stage $s$. Thus, overall, $f$ is defined on $B_2 * B_a$. 

Now let $X\subseteq\Nat$ be an infinite $H_a$-computable subset of $b(B_a)$ with characteristic function $\varphi_e^{H_a}$. 

From Theorem \ref{lem:clote} we can deduce the following.
Let $D_e$ denote the first $2e+2$ elements of $X$. 
Then there exists a $s \in B_a$ such that $\max(D_e) < \min(s)$ and for all $x \leq \max(D_e)$, we have 
$$ g(a,e,x,s) = \varphi_e^{H_a}(x).$$
We say that $s$ {\em is correct for} $D_e$.

By choice of $s$ and since $X$ is infinite there are exactly $2e+2$ numbers $x\leq \max(D_e) < \min(s)$ 
such that $g(a,e,x,s)=1$ (since $s$ is correct for $D_e$). Since at most $2e$ elements 
are claimed at stage $s$ before substage $e$, the construction at substage $e$ of stage $s$
claims the least unclaimed numbers $m, \ell < \min(s)$ with $g(a,e,m,s)= g(a,e,\ell,s)=1$ and defines $f(m,s) = f(\ell,s)$. 

By the minimality of $m$ and $\ell$ and by the correctness of $s$ for $D_e$
it must be the case that $m, \ell \in D_e \subseteq X$. 
But then $m, \ell \in X$ and $s \in B_a$ and $f(m,s)=f(\ell,s)$, 
so $X$ is not a rainbow for $f$ relative to the barrier $B_2*B_a$ (i.e. it is not the case that $f$ is injective on $(B_2*B_a)\vert X$).
\end{proof}

We briefly observe how some non-provability corollaries follow from the omitting theorems proved in the previous sections. Let $\alpha$ be a notation for a computable ordinal. Let $\Pi^0_\alpha$-$\mathsf{CA}_0$ be the system axiomatized by $\RCA_0$ augmented with 
the the well-ordering of $\alpha$ and the axiom of closure under the $\alpha$-th jump. Note that $\Pi^0_\alpha$-$\mathsf{CA}_0$ is strictly stronger than $\Pi^0_\beta$-$\mathsf{CA}_0$ if and only if $\alpha \geq \om\cdot \beta$. For a notation $a\in\mathcal{O}$ for an ordinal $\alpha\geq \om\cdot \beta$ the existence of a computable coloring of $B_2 * B_a$ with no thin set computable in ${H_a}$ (Theorem \ref{ThinNoCom}) implies that the barrier thin set theorem is not provable in $\Pi^0_\beta$-$\mathsf{CA}_0$, even if restricted to barriers of order type $\om^{1+\alpha}$. The same applies to the barrier free set theorem and to the barrier rainbow Ramsey theorem. 

\section{Conclusions and future work}

We formulated and proved natural generalizations of the free set theorem, the thin set theorem and the rainbow Ramsey theorem to colorings of barriers and we initiated the study of their effective and logical strength. 

We showed that the barrier free set theorem, the barrier thin set theorem and the barrier rainbow Ramsey theorem are consequences of the barrier Ramsey theorem. Moreover, the barrier thin set theorem and the barrier rainbow Ramsey theorem (for $2$-bounded colorings) are reducible to the barrier free set theorem as is the case in the fixed-dimension versions.  

We then showed that for each infinite computable ordinal $\alpha$, the free set, thin set and rainbow Ramsey theorems for barriers of order type $\om^{\alpha}$ have computable instances with no solution computable in $\emptyset^{(\alpha)}$. This was previously known only for $\alpha=\om$ (see \cite{carlucci2024ramseyliketheoremsschreierbarrier}). It remains to investigate whether computable instances of the free set, thin set or rainbow Ramsey theorems can code transfinite jumps. We plan to address this issue in future work.

\bibliographystyle{amsplain}
\bibliography{main-ref}

@article{carlucci_volpi_zdanowski_25,
	author = {Carlucci, Lorenzo and Volpi, Andrea and Zdanowski, Konrad},
	note = {arXiv:2505.02544},
	title = {The strength of {Ramsey's} theorem for coloring $\alpha$-large sets},
	year = {2026}}

@article{marcone_montalban_volpi_25,
	author = {Marcone, Alberto and Montalb{\'a}n, Antonio and Volpi, Andrea},
	note = {arXiv:2505.02544},
	title = {The barrier {Ramsey} theorem},
	year = {2025}}

@article{carroy_pequignot_2014,
	author = {Carroy, Rapha\"el and Pequignot, Yann},
	fjournal = {Fundamenta Mathematicae},
	journal = {Fundam. Math.},
	number = {3},
	pages = {247--270},
	title = {From well to better, the space of ideals},
	volume = {227},
	year = {2014}}

@article{Spector1955a,
	author = {Spector, Clifford},
	copyright = {https://www.cambridge.org/core/terms},
	doi = {10.2307/2266902},
	fjournal = {Journal of Symbolic Logic},
	issn = {0022-4812, 1943-5886},
	journal = {J. Symb. Log.},
	language = {en},
	month = mar,
	number = {2},
	pages = {151--163},
	title = {Recursive well-orderings},
	url = {https://www.cambridge.org/core/product/identifier/S0022481200096584/type/journal_article},
	urldate = {2025-06-10},
	volume = {20},
	year = {1955}}

@book{Rogers1987,
	address = {Cambridge, Mass},
	author = {Rogers, Hartley},
	edition = {1st MIT Press},
	isbn = {978-0-262-68052-3},
	publisher = {MIT Press},
	title = {Theory of recursive functions and effective computability},
	year = {1987}}

@article{cholak2020thin,
	author = {Cholak, Peter and Patey, Ludovic},
	doi = {10.1090/tran/7987},
	fjournal = {Transactions of the American Mathematical Society},
	issn = {0002-9947},
	journal = {Trans. Amer. Math. Soc.},
	mrclass = {03B30 (05A10)},
	mrnumber = {4069232},
	mrreviewer = {Huishan Wu},
	number = {4},
	pages = {2743--2773},
	title = {Thin set theorems and cone avoidance},
	url = {https://doi.org/10.1090/tran/7987},
	volume = {373},
	year = {2020}}

@article{wang2014some,
	author = {Wang, Wei},
	doi = {10.1016/j.aim.2014.05.003},
	fjournal = {Advances in Mathematics},
	issn = {0001-8708},
	journal = {Adv. Math.},
	mrclass = {03F35 (03B30)},
	mrnumber = {3213294},
	mrreviewer = {Alberto Marcone},
	pages = {1--25},
	title = {Some logically weak {R}amseyan theorems},
	url = {https://doi.org/10.1016/j.aim.2014.05.003},
	volume = {261},
	year = {2014}}

@article{carlucci2014strength,
	author = {Carlucci, Lorenzo and Zdanowski, Konrad},
	doi = {10.1017/jsl.2013.27},
	fjournal = {The Journal of Symbolic Logic},
	issn = {0022-4812},
	journal = {J. Symb. Log.},
	mrclass = {03F35 (03B30 05D10)},
	mrnumber = {3226013},
	mrreviewer = {Alberto Marcone},
	number = {1},
	pages = {89--102},
	title = {The strength of {R}amsey's theorem for coloring relatively large sets},
	url = {https://doi.org/10.1017/jsl.2013.27},
	volume = {79},
	year = {2014}}

@book{Todorcevic+2010,
	address = {Princeton},
	author = {Stevo Todorcevic},
	doi = {doi:10.1515/9781400835409},
	isbn = {9781400835409},
	lastchecked = {2024-11-14},
	publisher = {Princeton University Press},
	title = {Introduction to Ramsey Spaces},
	url = {https://doi.org/10.1515/9781400835409},
	year = {2010}}

@incollection{Cholak_Giusto_Hirst_Jockusch_2005,
	author = {Cholak, Peter A. and Giusto, Mariagnese and Hirst, Jeffry L. and Jockusch, Jr., Carl G.},
	booktitle = {Reverse mathematics 2001},
	mrclass = {03F35 (03B30 03D28 03D80)},
	mrnumber = {2185429},
	pages = {104--119},
	publisher = {Assoc. Symbol. Logic, La Jolla, CA},
	series = {Lect. Notes Log.},
	title = {Free sets and reverse mathematics},
	volume = {21},
	year = {2005}}

@article{Liu2022-LIUTRM,
	author = {Lu Liu and Ludovic Patey},
	doi = {10.1017/jsl.2021.98},
	fjournal = {Journal of Symbolic Logic},
	journal = {J. Symb. Log.},
	number = {1},
	pages = {313--346},
	title = {{The reverse mathematics of the Thin set and {E}rd\H{o}s-{M}oser theorems}},
	volume = {87},
	year = {2022}}

@article{CsimaMileti_2009,
	author = {Barbara F. Csima and Joseph R. Mileti},
	fjournal = {The Journal of Symbolic Logic},
	issn = {00224812, 19435886},
	journal = {J. Symb. Log.},
	number = {4},
	pages = {1310--1324},
	publisher = {[Association for Symbolic Logic, Cambridge University Press]},
	title = {The Strength of the Rainbow {Ramsey} Theorem},
	url = {http://www.jstor.org/stable/40378269},
	urldate = {2024-11-14},
	volume = {74},
	year = {2009}}

@incollection{marcone2005wqo,
	author = {Marcone, Alberto},
	booktitle = {Reverse mathematics 2001},
	mrclass = {03F35},
	mrnumber = {2185443},
	pages = {303--330},
	publisher = {Assoc. Symbol. Logic, La Jolla, CA},
	series = {Lect. Notes Log.},
	title = {Wqo and bqo theory in subsystems of second order arithmetic},
	volume = {21},
	year = {2005}}

@article{pouzet1972premeilleurordres,
	author = {Pouzet, Maurice},
	fjournal = {Annales de l'institut Fourier},
	journal = {Ann. Inst. Four.},
	number = {2},
	pages = {1--19},
	title = {Sur les pr{\'e}meilleurordres},
	volume = {22},
	year = {1972}}

@article{nash-williams1968better,
	author = {Nash-Williams, Crispin St. J. A.},
	doi = {10.1017/s030500410004281x},
	fjournal = {Proceedings of the Cambridge Philosophical Society},
	issn = {0008-1981},
	journal = {Proc. Cambridge Philos. Soc.},
	mrclass = {04.60 (05.00)},
	mrnumber = {221949},
	mrreviewer = {W. T. Tutte},
	pages = {273--290},
	title = {On better-quasi-ordering transfinite sequences},
	url = {https://doi.org/10.1017/s030500410004281x},
	volume = {64},
	year = {1968}}

@article{clote1986generalization,
	author = {Clote, Peter},
	doi = {10.2307/2274051},
	fjournal = {The Journal of Symbolic Logic},
	issn = {0022-4812},
	journal = {J. Symb. Log.},
	mrclass = {03D55 (03D25)},
	mrnumber = {840405},
	number = {2},
	pages = {273--291},
	title = {A Generalization of the Limit Lemma and Clopen Games},
	volume = {51},
	year = {1986}}

@article{galvinBorelSetsRamsey1973,
	author = {Galvin, Fred and Prikry, Karel},
	fjournal = {The Journal of Symbolic Logic},
	issn = {0022-4812},
	journal = {J. Symb. Log.},
	mrnumber = {0337630},
	pages = {193--198},
	title = {Borel Sets and {{Ramsey}}'s Theorem},
	volume = {38},
	year = {1973}}

@article{Pat:16,
	author = {Patey, Ludovic},
	doi = {10.1007/s11856-016-1433-3},
	fjournal = {Israel Journal of Mathematics},
	issn = {0021-2172},
	journal = {Israel J. Math.},
	mrclass = {03B30 (03F35)},
	mrnumber = {3557471},
	mrreviewer = {Fran\c{c}ois G. Dorais},
	number = {2},
	pages = {905--955},
	title = {The weakness of being cohesive, thin or free in reverse mathematics},
	url = {https://doi.org/10.1007/s11856-016-1433-3},
	volume = {216},
	year = {2016}}

@book{SIM:SOSOA,
	author = {Simpson, Stephen G.},
	doi = {10.1017/CBO9780511581007},
	edition = {Second},
	isbn = {978-0-521-88439-6},
	mrclass = {03F35 (03-02 03B30)},
	mrnumber = {2517689},
	pages = {xvi+444},
	publisher = {Cambridge University Press, Cambridge; Association for Symbolic Logic, Poughkeepsie, NY},
	series = {Perspectives in Logic},
	title = {Subsystems of second order arithmetic},
	url = {https://doi.org/10.1017/CBO9780511581007},
	year = {2009}}

@article{Jockusch1972,
	author = {Jockusch, Carl G.},
	doi = {10.2307/2272972},
	fjournal = {The Journal of Symbolic Logic},
	issn = {0022-4812, 1943-5886},
	journal = {J. Symb. Log.},
	language = {en},
	number = {2},
	pages = {268--280},
	title = {Ramsey's theorem and recursion theory},
	url = {https://www.cambridge.org/core/product/identifier/S0022481200079901/type/journal_article},
	urldate = {2023-02-15},
	volume = {37},
	year = {1972}}

@book{Dzhafarov.Mummert2022,
	address = {Cham},
	author = {Dzhafarov, Damir D. and Mummert, Carl},
	doi = {10.1007/978-3-031-11367-3},
	isbn = {978-3-031-11366-6 978-3-031-11367-3},
	language = {en},
	publisher = {Springer International Publishing},
	series = {Theory and {Applications} of {Computability}},
	shorttitle = {Reverse {Mathematics}},
	title = {Reverse {Mathematics}: {Problems}, {Reductions}, and {Proofs}},
	url = {https://link.springer.com/10.1007/978-3-031-11367-3},
	urldate = {2023-11-13},
	year = {2022}}

@article{Dorais.etal2015,
	author = {Dorais, Fran{\c c}ois and Dzhafarov, Damir and Hirst, Jeffry and Mileti, Joseph and Shafer, Paul},
	copyright = {https://www.ams.org/publications/copyright-and-permissions},
	doi = {10.1090/tran/6465},
	fjournal = {Transactions of the American Mathematical Society},
	issn = {0002-9947, 1088-6850},
	journal = {Trans. Amer. Math. Soc.},
	language = {en},
	number = {2},
	pages = {1321--1359},
	title = {On uniform relationships between combinatorial problems},
	url = {https://www.ams.org/tran/2016-368-02/S0002-9947-2015-06465-4/},
	urldate = {2025-05-12},
	volume = {368},
	year = {2015}}

@article{Clote1984,
	author = {Clote, Peter},
	copyright = {https://www.cambridge.org/core/terms},
	doi = {10.2307/2274171},
	fjournal = {Journal of Symbolic Logic},
	issn = {0022-4812, 1943-5886},
	journal = {J. Symb. Log.},
	language = {en},
	number = {2},
	pages = {376--400},
	title = {A recursion theoretic analysis of the clopen {Ramsey} theorem},
	url = {https://www.cambridge.org/core/product/identifier/S0022481200033442/type/journal_article},
	urldate = {2024-05-24},
	volume = {49},
	year = {1984}}

@article{carlucci2024ramseyliketheoremsschreierbarrier,
	author = {Lorenzo Carlucci and Oriola Gjetaj and Quentin Le Hou{\'e}rou and Ludovic Levy Patey},
	fjournal = {The Journal of Symbolic Logic},
	journal = {J. Symb. Log.},
	title = {Ramsey-like theorems for the {Schreier} barrier},
	year = {2025}}

@article{Assous,
	author = {Assous, Marc},
	journal = {Publications du D\'epartement de math\'ematiques (Lyon)},
	language = {fr},
	mrnumber = {366758},
	number = {4},
	pages = {89--106},
	publisher = {Universit\'e Claude Bernard - Lyon 1},
	title = {Caract\'erisation du type d'ordre des barri\`eres de {Nash-Williams}},
	url = {https://www.numdam.org/item/PDML_1974__11_4_89_0/},
	volume = {11},
	year = {1974},
	zbl = {0323.06001}}

\end{document}